\newtheorem{theorem}{Theorem}[section]
\newtheorem{proposition}[theorem]{Proposition}
\newtheorem{corollary}[theorem]{Corollary}
\newtheorem{lemma}[theorem]{Lemma}
\newtheorem{remark}[theorem]{Remark}
\newcommand{\bd}[1]{\boldsymbol{\mathbf{#1}}}
\providecommand\given{}
\DeclarePairedDelimiter\fbr{(}{)}
\DeclarePairedDelimiterX\abs[1]\lvert\rvert{\ifblank{#1}{\:\cdot\:}{#1}}
\DeclarePairedDelimiterX\Set[1]\{\}{\renewcommand\given{\SetSymbol[\delimsize]}#1}
\DeclarePairedDelimiterXPP\Indicator[1]{\mathbbm{1}}[]{}{#1}
\DeclarePairedDelimiterXPP\Prob[1]{\mathbb{P}}(){}{\renewcommand\given{\nonscript\:\delimsize\vert\nonscript\:\mathopen{}}#1}
\DeclarePairedDelimiterXPP\Exp[1]{\mathbb{E}}(){}{\renewcommand\given{\nonscript\:\delimsize\vert\nonscript\:\mathopen{}}#1}
\DeclarePairedDelimiterXPP\expo[1]{\exp}(){}{\renewcommand\given{\nonscript\:\delimsize\vert\nonscript\:\mathopen{}}#1}
\DeclarePairedDelimiterXPP\Var[1]{\mathrm{Var}}(){}{\renewcommand\given{\nonscript\:\delimsize\vert\nonscript\:\mathopen{}}#1}
\DeclarePairedDelimiterXPP\Varhat[1]{\widehat{\mathrm{Var}}}(){}{\renewcommand\given{\nonscript\:\delimsize\vert\nonscript\:\mathopen{}}#1}
\DeclarePairedDelimiterXPP\Cov[1]{\mathrm{Cov}}(){}{\renewcommand\given{\nonscript\:\delimsize\vert\nonscript\:\mathopen{}}#1}
\newcommand{\deri}{\mathrm{d}}
\newcommand{\RR}{\mathbb{R}}
\newcommand{\bbR}{\mathbb{R}}
\newcommand{\bbE}{\mathbb{E}}
\newcommand{\Letahat}{\widehat{\bd{L}^{(\eta)}}}
\newcommand{\Leta}{\bd{L}^{(\eta)}}
\newcommand{\Sigmainvhat}{\widehat{\bd{\Sigma}^{-1}}}
\newcommand{\Sigmainv}{\bd{\Sigma}^{-1}}
\newcommand{\leta}{l^{(\eta)}}
\newcommand{\bmt}{\mathrm{BMT}}
\let\hat\widehat
\title{Learning Networks from Gaussian Graphical Models and Gaussian Free Fields}
\author[S. Ghosh]{Subhro Ghosh$^*$}
\address{Subhro Ghosh, \ Department of Mathematics, \ National University of Singapore.}
\email{subhrowork@gmail.com}
\author[S. S. Mukherjee]{Soumendu Sundar Mukherjee$^*$}
\address{Soumendu S. Mukherjee, \ Theoretical Statistics and Mathematics Unit, \ Indian Statistical Institute, Kolkata}
\email{ssmukherjee@isical.ac.in}
\author[H.-S. Tran]{Hoang-Son Tran$^*$}
\address{Hoang-Son Tran, \ Department of Mathematics, \ National University of Singapore}
\email{hoangson.tran@u.nus.edu}
\author[U. Gangopadhyay]{Ujan Gangopadhyay}
\address{Ujan Gangopadhyay, \ Department of Mathematics, \ National University of Singapore}
\email{ujan@nus.edu.sg}
\begin{document}
\begin{abstract}
We investigate the problem of estimating the  structure of a weighted network from repeated measurements of a Gaussian Graphical Model (GGM) on the network. In this vein, we consider GGMs whose covariance structures align with the geometry of the weighted network on which they are based. Such GGMs have been of longstanding interest in statistical physics, and are referred to as the Gaussian Free Field (GFF). In recent years, they have attracted considerable interest in the machine learning and theoretical computer science. In this work, we propose a novel estimator for the weighted network (equivalently, its Laplacian) from repeated measurements of a GFF on the network, based on the  Fourier analytic properties of the Gaussian distribution. In this pursuit, our approach exploits complex-valued statistics constructed from observed data, that are of interest on their own right. We demonstrate the effectiveness of our estimator with  concrete recovery guarantees and bounds on the required sample complexity. In particular, we show that the proposed statistic achieves the parametric rate of estimation for fixed network size. In the setting of networks growing with sample size, our results show that for Erdos-Renyi random graphs $G(d,p)$ above the connectivity threshold, we demonstrate that network recovery takes place with high probability as soon as the sample size $n$ satisfies $n \gg d^4  \log d \cdot p^{-2}$.
\end{abstract}

\def\thefootnote{*}\footnotetext{Equal\,contribution}

\subjclass[MSC]{Primary 62F12; Secondary 62F10, 62F35;}

\keywords{Precision Matrix, Gaussian Free Field}

\maketitle

\tableofcontents

\section{Introduction}

\color{red}

\color{black}

\subsection{Gaussian Graphical Models}
Gaussian Graphical Models (GGM),
also known as 
Gaussian Markov Random Fields (GMRF),  
 are 
multivariate Gaussian distributions 
defined on undirected graphs. 
In these models, 
a Gaussian random variable is associated 
to each vertex of a graph,
and the existence or non-existence of 
edges captures the dependency structure
between these random variables. 

More precisely, 
suppose we have a graph 
$G=(V,E)$ 
on $|V|=d$ vertices. 
In a GMRF/GGM model,
we assume our sample 
$\bd{X}_1,\ldots,\bd{X}_n$ 
consists of i.i.d.\ copies of a random vector 
$\bd{X}=(X_1,\ldots,X_d)^\top$ 
having a multivariate normal distribution.

It is well-known that the covariance 
matrix $\bd{\Sigma}$ of a GGM captures the 
dependency structure between $X_i$s.
That is, for $i\neq j$, the $(i,j)$-th matrix entry $\bd{\Sigma}[i,j]=0$ 
if and only if the variable $X_i$ is independent of the
variable $X_j$. 

The inverse of the covariance matrix, 
often called the \textit{precision matrix} or 
the information matrix, is also of
longstanding interest, because it captures the 
conditional dependency structure between
$X_i$s. More precisely, the $(i,j)$-th matrix entry 
$\bd{\Sigma}^{-1}[i,j]$ for $i\neq j$ is $0$
if and only if $X_i$ is independent of $X_j$ 
given the rest of the variables.

More generally,the following may be shown to hold true. If, for some subset of
indices $A \subset V$, it holds that removing the vertices in 
$A$ from the graph $G$ disconnects the vertices $i$ and $j$, then the random variables
$X_i$ and $X_j$ are conditionally independent given $(X_k)_{k \in A}$. This is a form of graphical \textit{Markov property},
which  endows GGM-s with a highly attractive structure, both as a stochastic model and as a data modelling tool.

Unsurprisingly, GGM-s have attracted interest in a wide array of application domains as an effective modelling technique to capture the dependency structures among variates. These include applications to genomics (\citet{Genetics1},
\citet{Genetics2}, \citet{Genetics3}, \citet{Genetics4}); neuroscience  (\citet{Huang2010}, \citet{GAPT10}, \citet{NIPS09}, \citet{StrokeModelling10}); causal inference (\citet{Causal1}); to name a few.

\subsection{Learning networks from random fields}
The problem of learning a network from observations of a random field that lives on it has been a topic of great interest in recent years. A significant instance of this is accorded by the Ising model on graphs. The Ising model on a graph $G$ is a random field with values in the set $\{+1 , -1\}$, with a dependency structure that reflects the structure of the graph. Estimating the underlying graph (or various properties thereof) from observations of the Ising model has been a topic of intensive research activities in recent years. Investigations on this problem have been carried out in various settings, for details we refer the interested reader to (\citet{Ising1,Ising2,Ising3,Ising4,Ising5}) for a partial list of references.

In this paper, we investigate the problem of learning a network from a Gaussian Graphical Model supported on the network. More generally than unweighted graphs, we will concern ourselves with the broader problem of estimating a weighted network from a GGM supported thereon. This necessitates the correlation structure of the GGM to carry information about the weights on the edges of the network. A canonical choice for such a GGM is proffered by the so-called \textit{Gaussian Free Field} (\textit{abbrv.} GFF), which is what we will focus on in this work. 

\subsection{Gaussian Free Fields}
 Gaussian Free Fields (abbrv. GFF) have emerged as important models of strongly correlated Gaussian fields, that are canonically equipped to capture the geometry of their ambient space. In the case of graphs, the background geometry is encapsulated in the graph Laplacian. GFF-s  arise originally in theoretical physics, in the study Euclidean quantum field theories. Applications in physics generally require the GFF to be defined on the continuum, which is often a challenge in itself in context of the fact that, even on Euclidean spaces of dimension $>1$, the continuum GFF is defined only as a distribution valued random variable. On graphs and weighted networks, the setting we are principally interested in, it is of interest to study the so-called \textit{Discrete Gaussian Free Field} (\textit{abbrv.} DGFF). The DGFF, in comparison to the continuum setting, is well-defined as a random field that lives on the nodes of the network; however, it exhibits degeneracy properties as a Gaussian random vector, which demands some consideration in its definition.

The GFF is, for many natural reasons, a GGM of wide interest in its own right. For one, a quadratic form based on the network Laplacian encodes smoothness with respect to the geometry of the weighted network. This underpins the significance of GFF based models in active and semi-supervised learning (\citet{GFF-ML-1,GFF-ML-2,GFF-ML-3,ghosh2022learning}). In fact, it has been shown (\citet{Kelner}) that DGFFs essentially cover all possibilities in an important class of GGMs known as \textit{attractive} GGMs, wherein the pairwise correlations are all non-negative and which arise naturally in a wide array of applications such has phylogenetic studies and copula models of finance. For more details on the generalities of the GFF and its significance with statistical and mathematical physics,  we refer the reader to the excellent mathematical surveys (\citet{Sheffield,Berestycki}).

 \subsubsection{The Massless DGFF}
The massless DGFF on a weighted graph $G=(V(G),E(G))$
is defined as follows. Let $\partial$ be a distinguished set of vertices, 
called the boundary of the graph.
Let $S_n$ be the simple symmetric random walk on $G$.
Let $\tau$ be the hitting time of $\partial$.
The Green function $\mathbf{G}(x,y)$ is defined for $x,y\in V(G)$
by putting 
\[
\mathbf{G}(x,y) = \frac{1}{\deg(y)} \bbE\Bigl(\sum_{n=0}^\infty\Indicator{X_n = y; \tau>n}\Bigr)\;.
\]
The DGFF is the 
centered Gaussian vector $(h(x))_{x\in V(G)}$
with covariance given by the Green function $\mathbf{G}$.
In other words, if $A \subset \mathbb{R}^{|V(G)|}$, then
the probability distribution  of 
$(h(x))_{x\in V(G)}$ is given by 
\begin{equation}
\Prob[\Big]{(h(x))_{x\in V(G)} \in A}
=
\frac{1}{Z}
\int_A\expo[\Big]{-\frac{1}{4}\sum_{x_i\sim x_j} (h(x_i)-h(x_j))^2}
\prod_{x_i\not\in\partial}
\mathrm{d}h(x_i)
\prod_{x_i\in\partial}
\delta_0(\mathrm{d}h(x_i))\;,
\end{equation}
where $\delta_0$ is the Dirac delta measure at 0. In particular, the field always takes value $0$
at the boundary vertices $\partial$.

 \subsubsection{The Massive DGFF}
The massive DGFF on a weighted graph $G=(V(G),E(G))$ is, in a sense, simpler than the massless case, and is defined as follows. For the \textit{mass parameter} $\mu$, the massive DGFF $(h(x))_{x\in V(G)}$ on $G$ is given by the following relation. For $A \subset \mathbb{R}^{|V(G)|}$, then
the probability distribution  of 
$(h(x))_{x\in V(G)}$ is given by 
\begin{equation}
\Prob[\Big]{(h(x))_{x\in V(G)} \in A}
=
\frac{1}{Z}
\int_A\expo[\Big]{-\frac{1}{4}\sum_{x_i\sim x_j} (h(x_i)-h(x_j))^2 + \frac{1}{4} \mu \sum_{x_i \in V} h(x_i)^2}
\prod_{x_i \in V}
\mathrm{d} h(x_i)
\;.
\end{equation}
In other words, the probability density function of $\mathbf{h}:=(h(x))_{x\in V(G)}$ is given, up to a normalization constant, by 
\begin{equation}
\expo[\Big]{- \frac{1}{4} \bd{h}^\top (\bd{L} + \mu \bd{I}) \bd{h} }
\;,
\end{equation}
where $\bd{L}$ is the standard Laplacian matrix of the graph, and $\mu$ is the mass parameter of the model.

The massive DGFF is the setting we will concern ourselves with in this article. The mass parameter $\mu$ clearly controls the well-conditionedness of the model; and the regime we will particularly concern ourself with is the one where the parameter $\mu$ is not too large, thereby focusing on poorly conditioned GGMs.

\subsection{Learning networks from Gaussian Free Fields}
In this article we consider the 
problem of precision matrix estimation of the Gaussian Free Field. To wit, we consider a graph $G=(V(G),E(G))$ with vertex set $V(G)$ and edge set $E(G)$. Let $|V(G)| = d$ be the number of 
vertices and we label the vertex set with $\{1,\ldots,d\}$.
Suppose we have an i.i.d.\ sample $\bd{X}_1,\ldots,\bd{X}_n$
of random vectors in $\RR^d$ where each $\bd{X}_m$ follows a  
multivariate normal distribution $\mathcal{N}
(\bd{0},\bd{\Sigma})$. 
The covariance matrix
is related to the graph $G$ in the following way. 
We assume $\bd{\Sigma} = (\bd{L}+\mu\bd{I})^{-1}$
where $\bd{L}$ is the graph Laplacian and $\mu>0$ is an
unknown parameter. 

The graph Laplacian $\bd{L}$ is the 
$d\times d$ matrix defined as $\bd{L}=\bd{D}-\bd{A}$ 
where $\bd{D}$ is the degree matrix and $\bd{A}$ is the 
adjacency matrix. Thus, for an unweighted graph, $(i,i)$-th entry of $\bd{L}$
is the degree of vertex $i$, and for $i\neq j$ the 
$(i,j)$-th entry of $\bd{L}$ is $0$ if $i$ is not connected to $j$, and it is $-1$ if 
$i$ is connected to $j$. For a weighted graph, $\bd{A}$ is the \textit{weighted} adjacency matrix of the
graph; i.e. $ \bd{A}_{ij} = w_{ij}$, where $w_{ij}$ is the weight of the edge between vertices $i$ and $j$. The matrix $\bd{D}$ in this setting would be the weighted degree matrix of the graph; i.e., $\bd{D}$ is a diagonal matrix with $\bd{D}_{ii}=\sum_{j \in V(G)} \bd{A}_{ij}$.

We will concern ourselves the problem of 
estimating $\bd{L}$  from $\bd{X}_1,\ldots,\bd{X}_n$.

Thus our problem is same as estimating
the graph underlying a DGFF where $\mu$ 
can be thought of as the number of vertices
that are designated as the boundary. 
These boundary vertices are connected 
with all the other vertices.

\subsection{A survey on the estimation of precision matrices}
In view of its multifaceted importance, \cite{Dempster72} initiated
investigations into the problem of estimating the precision matrix of a GGM.
In this subsection, we provide a partial survey of the principal approaches to this 
longstanding problem, and the associated problem of estimating the covariance matrix of a GGM.

\subsubsection{Estimation of covariance matrices}
Estimating the covariance matrix of a GGM is 
not difficult when the sample size $n$
is much bigger than $d$. In this case 
the sample covariance matrix 
\[
\bd{\Sigma}_n \coloneqq 
\frac{1}{n}
\sum_{k=1}^n
\Bigl(\bd{X}_k-\overline{\bd{X}}_n\Bigr)
\Bigl(\bd{X}_k-\overline{\bd{X}}_n\Bigr)^\top\;,
\mbox{ where }
\overline{\bd{X}}_n
\coloneqq
\frac{1}{n}\sum_{k=1}^n\bd{X}_k\;,
\]
is a natural estimator of 
$\bd{\Sigma}$, 
and has good consistency properties.
In the high-dimensional setup i.e., 
when the number of variables $d$ is 
much larger than $n$, the sample
covariance matrix is not a 
good estimator of the true covariance 
matrix $\bd{\Sigma}$.
In this case estimating the 
covariance matrix
is a challenging problem,
estimating the precision 
matrix is even more difficult.

\subsubsection{Estimation under order structures}
It is possible to estimate $\bd{\Sigma}$ 
using $\bd{\Sigma}_n$ consistently 
if there exists additional structure
on the variables. For example,
if the variables have a certain 
total ordering, for example in a 
time series data, then one 
may assume $\bd{\Sigma}[i,j]$ 
is $0$ or near $0$ when $|i-j|$ 
is big enough.
This leads to a 
banding structure in 
$\bd{\Sigma}$. Under this kind of 
assumptions \citet{BL08a} 
showed that banding the 
sample covariance matrix 
leads to a consistent 
estimator. \citet{CZZ10} 
considered the same class 
of estimators as 
\citet{BL08a} and 
established the minimax 
rate of convergence and 
also constructed a 
rate-optimal estimator. The minimax 
rate is given by
\[
\inf_{\widehat{\bd{\Sigma}}}
\sup_{\mathcal{P}_\alpha}
\Exp{\|\widehat{\bd{\Sigma}} -
\bd{\Sigma}\|^2}
\asymp 
\min\Bigl\{ n^{-2\alpha/(2\alpha+1)}+
\frac{\log d}{n},\frac{d}{n} 
\Bigr\}\; \cdot
\]
Here $\alpha$ is a sparsity parameter
and $\mathcal{P}_\alpha$ is a class 
of sparse covariance matrices.
Larger $\alpha$ corresponds to 
sparser matrices. 

Assuming certain ordering 
structures on the
variables, methods based on 
banding the Cholesky
factor of the inverse 
covariance matrix for 
estimating the covariance 
matrix have also been 
proposed and studied (see, 
e.g., \citet{WP03}, 
\citet{HLPL06}).

\subsubsection{Estimation under structured sparsity}
A natural total ordering on 
the set of variables is 
unavailable in many 
situations.
Also finding a suitable 
basis under which the 
sample-covariance matrix 
displays a banding 
structure is often 
computationally impractical.
To deal with these situations
\citet{K08} and \citet{BL08b} 
suggested assuming certain 
permutation invariant sparsity 
conditions on the covariance matrix
and proposed thresholding the 
sample covariance matrix for estimation. 
They obtained rates of convergence 
for the thresholded sample 
covariance estimator.

\subsubsection{Penalized likelihood based methods}
Penalized likelihood based methods
are also very popular for estimation
of sparse precision matrices. 
\citet{MB06} estimate a sparse 
precision matrix by fitting a lasso 
model to each variable, using the 
others as predictors. 
$\bd{\Sigma}^{-1}[i,j]$ is then
estimated to be nonzero if either the
estimated coefficient of variable $i$ on
$j$ or the estimated coefficient of
variable $j$ on $i$ is nonzero. They show
that asymptotically, this consistently
estimates the set of nonzero elements of
$\bd{\Sigma}^{-1}$.

\subsubsection{The Graphical Lasso}
Several algorithms for 
the exact
maximization of the 
$\ell_1$-penalized
log-likelihood have been
proposed in the literature
(see for e.g. \citet{YL07}, \citet{DBG07}, \citet{DVR08}, \citet{FHT07}, \citet{RBLZ08}, \citet{CLL11}, \citet{CLLX12}, \citet{RWRY11}.) 
\citet{FHT07} introduced
the Graphical Lasso estimator 
which minimizes
\[
-\log\det\bigl(\bd{\Sigma}^{-1}\bigr) 
+ \frac{1}{n}
\sum_{i=1}^n
\bigl(\bd{X}_i-\bd{\mu}\bigr)^\top
\bd{\Sigma}^{-1}
\bigl(\bd{X}_i-\bd{\mu}\bigr)
\]
subject to the sparsity condition $\sum_{i\neq j}\bd{\Sigma}^{-1}[i,j]
\leq t\;,$ where $t\geq 0$ is a tuning parameter.
An equivalent formulation is minimizing
\[
-\log\det\bigl(\bd{\Sigma}^{-1}\bigr)
+\mathrm{trace}\bigl(\bd{\Sigma}^{-1}\bd{\Sigma}_n\bigr)
+\lambda\sum_{i\neq j}
\bigl|\bd{\Sigma}^{-1}[i,j]\bigr|
\]
where $\lambda\geq 0$ is a tuning parameter.
If $\bd{\Sigma}^{-1}$ satisfies various conditions, which typically include an assumption similar to or stronger than the restricted eigenvalue (RE) condition (a condition which, in particular, lower bounds
the smallest eigenvalue of any 
$2d\times 2d$ principal submatrix of $\bd{\Sigma}$ where $d$ is the maximum vertex degree) then Graphical Lasso succeeds in recovering the graph structure. Further, under some incoherence assumptions on the precision matrix (stronger than RE), it has been shown by  
\citet{RWRY11} that the sparsity pattern of the precision matrix can be accurately recovered from 
$O((1/\alpha^2)d^2\log(n))$ samples; here $\alpha\in(0,1)$ is the incoherence parameter defined as follows. Suppose $\bd{\Theta}=\bd{\Sigma}^{-1}$. Let
\[
\bd{\Gamma}
\coloneqq
\nabla^2_{\bd{\Theta}^\prime}
g(\bd{\Theta}^\prime)|_{\bd{\Theta}^\prime
=\bd{\Theta}} 
= \bd{\Theta}^{-1} \otimes \bd{\Theta}^{-1}
\]
where 
\[
g(\bd{A}) \coloneqq 
\begin{cases}
-\ln\det \bd{A} & \mbox{ if } A\succ 0\\
\infty & \mbox{ otherwise }
\end{cases}
\]
and $\otimes$ denotes the Kronecker 
matrix product. Thus $\bd{\Gamma}$ is a 
$p^2\times p^2$ matrix,
indexed by vertex pairs
so that $\bd{\Gamma}[(j,k),(l,m)]$ is the partial derivative 
\[
\frac{\partial^2 g}{\partial \bd{\Theta}^\prime_{jk} \partial \bd{\Theta}^\prime_{lm} }
\]
evaluated at $\Theta$. 
For Gaussian observations this is simply $\Cov{X_jX_k,X_lX_m}$.
The mutual incoherence or irrepresentable condition is the following:
\[
\|\bd{\Gamma}_{S^c S} (\bd{\Gamma}_{SS})^{-1}\|_{\infty}
\leq 1-\alpha
\]
for some $\alpha\in(0,1)$. 
Here $S$ is the set of vertex pairs either of the form $(i,i)$ or $(i,j)$
if $i$ is connected to $j$. 
This condition imposes control on the
influence that the non-edge terms, 
indexed by $S^c$, can have on the 
edge-based terms, indexed by $S$. 
A similar condition for the Lasso,
with the covariance matrix $\bd{\Sigma}$ in the place of $\bd{\Gamma}$, is necessary and sufficient for support recovery 
using the ordinary Lasso 
(\cite{MB06}, \cite{T06}, \cite{W09}, \cite{ZY06}.)

\subsubsection{The CLIME estimator}
Another popular estimator is the CLIME 
(constrained $\ell_1$-minimization for
inverse matrix estimation) introduced by
\citet{CLL11}. It solves the following
optimization problem
\[
\mbox{minimize } 
\|\bd{\Theta}\|_1
\mbox{ such that }
\|\widehat{\bd{\Sigma}_n}\Theta - \bd{I}\|_\infty \leq \lambda\;, 
\]
where $\lambda$ is a tuning parameter. The analysis of CLIME uses a condition
number assumption. For example,
if entries $\bd{\Sigma}^{-1}[i,j]$ are either zero or bounded away from zero by an absolute constant then CLIME succeeds at structure recovery when given roughly $C M^4 \log d$ samples where $M=\max_{\|\bd{u}\|_\infty\leq 1}\|\bd{\Sigma}^{-1}\bd{u}\|_\infty$. 
\citet{CLZ16} obtained minimax rates for
precision matrix estimation in the high-dimensional setting. They also proposed 
a fully data driven estimator called
ACLIME based on adaptive constrained 
$\ell_1$ minimization and obtained
rate of convergence. A survey of  
minimax rates for sparse covariance
matrix estimation and sparse precision 
matrix estimation along with rates of 
convergence of various $\ell_1$-penalized
estimators can be found in the expository
article \citet{CZZ16}. \citet{DBG08}
considers penalizing the number of
nonzero terms instead of the $\ell_1$
penalty. \citet{LLW09} have showed that
for a class of non-Gaussian distribution
called nonparanormal distribution,
the problem of estimating the graph 
also can be reduced to estimating the
precision matrix. \citet{Y10} replaced
the lasso selection by a Dantzig-type
modification, where first the ratios
between the off-diagonal elements
$\bd{\Sigma}^{-1}[i,j]$ and the 
corresponding diagonal element
$\bd{\Sigma}^{-1}[i,i]$ were estimated for
each row $i$ and then the diagonal
entries $\bd{\Sigma}[i,i]$ were obtained
given the estimated ratios. \citet{LF07}, 
\citet{FFW09} considered penalizing the
normal likelihood with a nonconvex 
penalty in order to reduce the bias of 
the  $\ell_1$ penalized estimator.

\subsubsection{Information theoretic lower bounds}
\citet{MVL20} considers the problem
of finding information theoretic lower bound on the sample size for recovering the precision matrix in a sparse GGM. They establish that
for a model defined on a sparse graph with $p$ nodes, a maximum degree $d$ and minimum normalized edge strength $\kappa$, the necessary number of samples scales at least as $d \log p/\kappa^2$. The 
parameter $\kappa$, 
called the minimum normalized edge strength,
is defined as
\[
\kappa
\coloneqq
\min_{(i,j)\in E} 
\frac{\bd{\Sigma}^{-1}[i,j]}{\sqrt{\bd{\Sigma}^{-1}[i,i]}
\sqrt{\bd{\Sigma}^{-1}[j,j]}}\;.
\]
They propose an algorithm called
Degree-constrained
Inverse Covariance Estimator (DICE) which achieves this information theoretic lower bound.
They also propose another algorithm called 
Sparse Least-squares Inverse Covariance Estimator
(SLICE) which uses mixed integer quadratic programming, making it more efficient, but the sample complexity of SLICE is roughly $1/\kappa^2$ higher than the information theoretic lower bound.

\subsubsection{Ill-conditioned GGMs}
CLIME or Graphical Lasso is
only suitable when the precision 
matrix is well-conditioned. 
\citet{KKMM20} considers the problem of estimating a ill-conditioned precision matrix in some important class of
GGMS. They give fixed polynomial-time algorithms for learning \emph{attractive GGMs} and \emph{walk-summable GGMs} 
with a logarithmic number of samples. Attractive GGMs are GGMs in which the 
off-diagonal entries of $\Theta$ are 
non-positive. This means that all partial
correlations are non-negative. These 
are often used in practice, for example 
in phylogenetic applications, observed
variables are often positively dependent because of shared ancestry (see \citet{Z16}); also in finance where using a latent global market variable leads to positive dependence (see \citet{MS05}). \citet{KKMM20} introduces an algorithm called GREEDY-AND-PRUNE which has sample complexity $\log(1/\kappa)$ times higher than the information theoretic lower bound but is more efficient than other methods. Walk-summable GGMs are defined as follows, see \citet{MJW06} for more details.
A walk of length $l\geq 0$ in
a graph $G=(V(G),E(G))$ is a 
sequence $w=(w_0,w_1,\ldots,w_l)$
of nodes $w_k\in V(G)$ such that 
each step of the walk, 
say $(w_k,w_{k+1})$,
corresponds to an edge of the graph
$\{w_k,w_{k+1}\}\in E(G)$. 
Walks may visit nodes and
cross edges multiple times. 
We let $l(w)$ denote the 
length of walk $w$.
We define the weight of a 
walk to be the product of 
the edge weights along 
the walk:
\[
\phi(w)=\prod_{k=1}^{l(w)} 
r_{w_{k-1},w_k}\;.
\]
We also allow zero-length ``self''
walks $w=(v)$ at each node $v$ for 
which we define $\phi(w)=1$.
The connection between these walks
and Gaussian inference can be seen
as follows. We decompose the
covariance matrix as
\[
\bd{\Sigma} = 
\bd{\Theta}^{-1} = 
(\bd{I}-\bd{R})^{-1} 
= \sum_{k=0}^\infty \bd{R}^k,
\]
for $\rho(\bd{R})<1$.
We have assumed that the model is normalized by rescaling variables so that $\Theta[i,i]=1$ for all $i$. Then then $\bd{R}=\bd{I}-\bd{\Theta}$ has
zero diagonal and the off-diagonal elements are equal to the partial 
correlation coefficients $r_{ij}$
\[
r_{ij}\coloneqq
\frac{\Cov{x_i,x_j\given x_{V\setminus\{ij\}}}}
{\sqrt{\Var{x_i\given x_{V\setminus\{ij\}}}}
\sqrt{\Var{x_j\given x_{V\setminus\{ij\}}}}}
\]
The $(i,j)$'th entry of $\bd{R}^l$ 
is sum over weights of paths of 
length $l$ that go from $i$ to $j$.
A GGM is called walk-summable if for 
all $i,j$ the sum of $|\phi(w)|$ over all walks from $i$ to $j$ is finite almost surely. For learning walk-summable GGMs \cite{KKMM20} introduces an algorithm called HYBRIDMB which has sample complexity $1/\kappa^2$ times the information theoretic lower bound.

\subsubsection{Bayesian Approaches}
Bayesian methods have also been utilized
for estimation of precision matrices.
\citet{BG15} considers a prior distribution on the off-diagonal entries of the precision matrix which put a mixture of a point mass at zero and certain absolutely continuous distribution. They establish posterior consistency of the resulting estimator.
Posterior consistency was established
for class of banded precision matrix in
\citet{BG14}. In \citet{SGM21} the authors
consider the situation where the observation from the Graphical model are tampered with Gaussian measurement errors.

\subsection{Our contributions and future directions} In this work, we contribute a novel estimator for the weighted network from  samples of a GFF on the network,
based on certain Fourier analytic properties of the Gaussian distribution. In this pursuit,
our approach exploits complex-valued statistics constructed from observed data, that are of
interest on their own right. 

To wit, we begin with the observation that the logarithm of a probability density in a Gaussian Free Field is essentially a quadratic form of the network Laplacian (up to an additive constant), and is thus of interest in learning the Laplacian via its quadratic forms. Fundamentally, our approach is underpinned by the observation that the standard Gaussian density is essentially a fixed point of the Fourier transform, and for a general covariance matrix $\Sigma$, the Fourier transform entails a mapping $\Sigma \mapsto \Sigma^{-1}$ on the exponent of the Gaussian density. Thus, two successive applications of the Fourier transformation acts like an involution on a Gaussian density, up to a normalization constant. While taking the Fourier transform via numerical integration can be challenging computationally, we tackle this via a stochastic approach, by averaging against an independent Gaussian with a suitable dispersion. Our test statistic, which is complex-valued, is conveniently bounded in absolute value by 1, thereby allowing for superior concentration of measure effects. 

Complex-valued statistical observables have certain salutary properties, which can be of interest from theoretical perspectives. In particular, they embody a phase, which can lead to stronger cancellation effects due to the destructive interference of phases. Such statistics, however, have not been exploited to their full potential, and literature on their application is quite limited. A recent instance in the literature is accorded by \citet{Belomestny2019SparseDeconvolution}, where the authors use complex-valued test statistics in order to perform deconvolution in the context of covariance matrix estimation. 

The approach proposed in the present work is conceptually simple and computationally light, in addition to having highly tractable analytical properties. Most of the known techniques for precision matrix estimation for GGMs are known to be of limited effectiveness when the GGM is ill-conditioned, such as the GFF (with a small mass parameter). Furthermore, the known techniques for graph recovery from GGMs, especially in the ill-conditioned setting (see, e.g., \citet{Kelner}) are often combinatorial in nature and are more suited for the setting of unweighted graphs. Much of the existing literature is also geared towards the learning of sparse graphs (in the context of the high-dimensional tradeoff between system size and data availability), and involve computationally intensive optimization procedures. However, it may be noted that a network can be \textit{low dimensional} without being sparse -- this is  significant vis-a-vis current interest in generative models, where a dense network can be generated from generative model with only a few parameters. A classic case in point is that of the Erdos-Renyi random graph in the dense regime, which is characterised by a single parameter, namely the edge connection probability; another instance on similar lines is provided by a stochastic block model.

Our approach addresses many of these issues with a simple and easy-to-compute estimator. We demonstrate the effectiveness of our estimator with concrete
recovery guarantees and bounds on the required sample complexity. In particular, we show
that the proposed statistic achieves the parametric rate of estimation for fixed network size.
In the setting of networks growing with sample size, our results show that for Erdos-Renyi
random graphs $G(d,p)$ above the connectivity threshold, network recovery takes place with high probability as soon as the sample size $n$ satisfies $n \gg d^4 \log d \cdot p^{-2} $.

We believe that the present work inaugurates the study of complex-valued statistics and techniques inspired by Gaussian Fourier analysis in the context of GGMs, and more generally, Gaussian random fields with a geometric structure. A direction of particular interest would be to augment our simple approach with additional ingredients so as to account for structured network models, a case in point being that of sparsity. Further improvisations and modifications of our relatively straightforward approach to provide efficient learning in wider classes of networks and enhanced rates in specific structural scenarios provide natural avenues for further investigation. 

{\bf Acknowledgements.} S.G. was supported in part by the MOE grants R-146-000-250-133, R-146-000-312-114 and MOE-T2EP20121-0013. S.S.M. was partially supported by an INSPIRE research grant (DST/INSPIRE/04/2018/002193) from the Department of Science and Technology, Government of India.

\section{Learning networks from Gaussian Free Fields}

In this section, we will lay out the our approach to estimation of the weighted network (equivalently, its Laplacian) based on Gaussian Fourier analysis. To fix notations, we set
$\bd{\Sigma} = (\bd{L} + \mu \bd{I})^{-1}$ and $\bd{L}$ are $d\times d$ matrices and $\mu>0$. Let $$\lambda_1 := \lambda_{\max}(\bd{L}).$$

For $\eta>0$, we define
\begin{equation} \label{eq:Leta-def}
\Leta
\coloneqq 
\bigl(\bd{\Sigma}+\eta^{-1}\bd{I}\bigr)^{-1}\;.
\end{equation}

Our estimation procedure will comprise of two steps. We will first estimate $\Leta$, and then from this estimate of $\Leta$ we will construct an estimator of $\bd{\Sigma}^{-1}$.

\subsection{Estimation of \texorpdfstring{$\Leta$}{Leta}}
Our approach to estimation of $\Leta$ is based on the Fourier analytic properties of the Gaussian distribution. In particular, up to constants, the standard Gaussian density in $d$ dimensions is a fixed point of the Fourier transform. For a general covariance matrix $\bd{\Sigma}$, the Fourier transform induces a mapping $\bd{\Sigma} \mapsto \bd{\Sigma}^{-1}$ on the exponent of the Gaussian density. Thus, two successive applications of the Fourier transformation acts like an involution on a Gaussian density, up to a normalization constant. 

Therefore, if there is a simple way to mimic the Fourier transform of an underlying Gaussian based on data generated from that distribution, then a two-fold application of the Fourier transform (followed by a logarithmic transformation) would approximately result in a quadratic form in the precision matrix. Obtaining the precision matrix from this quadratic form would then be a rather simple matter. 

In expectation, taking the Fourier transform of a Gaussian density on the basis of random samples from it is relatively canonical : we simply consider the plane wave corresponding to the random variable (or more precisely, its empirical version). To wit, if $\bd{W}$ is a $d$-dimensional Gaussian random vector, then $\Exp[\Big]{\expo[\big]{i\langle \bd{\xi} , \bd{W} \rangle}}$ is the Fourier transform of the density of $\bd{W}$, evaluated at $\bd{\xi} \in \mathbb{R}^d$. In practice, we do not work at the level of expectations, but based on a large set of samples drawn from $\bd{W}$; thus the act of taking expectation is substituted canonically with averaging over this sample. Thus, we are performing a \textit{stochastic} version of a Fourier transformation, based on observed samples from a distribution. In this vein, it is convenient that our test statistic (which is notably complex-valued) is conveniently bounded in absolute value by 1, thereby allowing for strong concentration of measure effects. 

A second application of the Fourier transform would nominally entail another integral (in the variable $\bd{\xi}$) against the complex harmonic $e^{i \langle \bd{\xi} , \bd{t} \rangle}$. However, numerically integrating statistical estimates, such as those obtained from the fist round of stochastic Fourier transform above, can be rather challenging, with attendant numerical instability effects. We tackle this issue by replacing the Lebesgue measure in this second integral by a Gaussian density with a suitable variance $\eta$; the intuitive idea being that the true integral can be seen as a limit when $\eta \to \infty$. This introduces the additional parameter $\eta$ into our estimation procedure, but this is not too difficult to eliminate, and is the focus of the subsequent step of the process, discussed in the next section. 

In this work, we combine the two steps above in one stroke, by considering the estimator in \eqref{eq:phiest}, which in turn is motivated by the expectation-level quantity $\varphi$ in \eqref{eq:phi} and \eqref{eq:phialt}. In fact, an intuitively clarifying interpretation to \eqref{eq:phi} would be to first take the expectation with respect to the random variable $\bd{X}$ (corresponding to the first round of Fourier transform discussed above), followed by expectation with respect to the variable $\bd{Y}$ (corresponding to the second round of Fourier transformation in our earlier discussion).

Let $\varphi:\bbR^d\to\bbR$ be defined as
\begin{equation} \label{eq:phi}
\varphi(\bd{t})\coloneqq\Exp[\Big]{\expo[\big]{i\langle\bd{Y},\bd{X}+\bd{t}\rangle}}
\end{equation}
where 
$\bd{X}\sim\mathcal{N}(\bd{0},\bd{\Sigma})$, 
$\bd{Y}\sim\mathcal{N}(\bd{0},\eta\bd{I})$,
and  $\langle\cdot,\cdot\rangle:\bbR^d\times\bbR^d\to\bbR$ is the usual inner product
\[
\langle\bd{y},\bd{x}\rangle \coloneqq
\sum_{j=1}^d y_j x_j\;.
\]
An alternative expression of $\varphi(\bd{t})$
is the following (see Lemma~\ref{lem:4.1})
\begin{equation} \label{eq:phialt}
\varphi(\bd{t})
= 
\det\left(\frac{1}{\eta}\Leta\right)^{1/2}
\cdot
\expo*{-\frac{1}{2} \langle \bd{t},\Leta\bd{t} \rangle}\;.
\end{equation}
Estimating $\varphi(\bd{t})$ 
for well-chosen values of $\bd{t}\in\bbR^d$, and suitably aggregating 
these estimates, we will obtain an estimate of $\Leta$. 

The sample version of the definition \eqref{eq:phi} of $\varphi(\bd{t})$ 
naturally suggests the following unbiased 
estimator:
\begin{equation} \label{eq:phiest}
\varphi_n(\bd{t})
\coloneqq
\frac{1}{n}
\sum_{k=1}^n 
\expo[\big]{i\langle\bd{Y}_k,\bd{X}_k+\bd{t}\rangle}\;,
\end{equation}
where $\bd{Y}_1,\ldots,\bd{Y}_n$ 
are i.i.d.\ random vectors in 
$\bbR^d$ which are independent 
of the $\bd{X}_j$s with the common
distribution $\mathcal{N}(\bd{0},\eta\bd{I})$.
Let $\bd{e}_1,\ldots,\bd{e}_d$ 
denote the standard basis of 
$\bbR^d$. 

We propose the following 
estimator of $\Leta = [l_{ij}^{(\eta)}]_{1\le i,j \le d}$:
\begin{equation}\label{eq:main}
\widehat{\Leta}
\coloneqq 
\Bigl[\hat{\leta_{ij}} \Bigr]_{1\le i,j \le d}\;, 
\end{equation}
where

{\begingroup
\addtolength{\jot}{1em}
\begin{align}
\hat{l_{ij}^{(\eta)}} \coloneqq & - 2\log\big|\varphi_n \left(\frac{\bd{e}_i+\bd{e}_j}{\sqrt{2}}\right)\big|
+ \log\big|\varphi_n(\bd{e}_i)\big| + \log\big|\varphi_n(\bd{e}_j)\big| \mbox{ for }i\neq j\;, \mbox{and}\\
\hat{l_{ii}^{(\eta)}} \coloneqq & 
-2\log\big|\varphi_n(\bd{e}_i)\big| + 2\log\big|\varphi_n(\bd{0})\big| \;.
\end{align}
\endgroup}

\subsection{Estimation of $\bd{\Sigma}^{-1}$ from $\Leta$}
The quantity $\Leta$ estimated in the previous section involves the parameter $\eta$ that is an artefact of our procedure. In this section our goal is to put forward a principled way of eliminating the parameter $\eta$ and obtain an estimate of $\bd{\Sigma}^{-1}$, which is our  object of interest. 

A vanilla approach to obtaining $\bd{\Sigma}^{-1}$ from $\Leta$, in light of its defining equation \eqref{eq:Leta-def}, is to observe that $\bd{\Sigma}^{-1}$ and $\Leta$ commute, and therefore spectral inversion is a possibility. However, direct inversion at the level of eigenvalues can lead to numerical instabilities, and instead, we propose the following approach, based on the so-called \textit{Woodbury's identity}. 

We first write down an identity relating $\bd{\Sigma}^{-1}$ and $\Leta$. To this end, we recall the Woodbury matrix identity \cite{woodbury1950inverting}:
\begin{equation} \label{eq:Woodbury}
\bigl(\bd{A} + \bd{U}\bd{C}\bd{V}\bigr)^{-1} 
= \bd{A}^{-1} - 
\bd{A}^{-1} \bd{U} \bigl(\bd{C}^{-1} + 
\bd{V} \bd{A}^{-1} \bd{U} \bigr)^{-1} \bd{V} \bd{A}^{-1}\;.
\end{equation}
With $\bd{A} = \eta^{-1} \bd{I}$,
$\bd{C} = \bd{\Sigma}$, 
$\bd{U} = \bd{I}$ 
and $\bd{V} = \bd{I}$,
we get
\[
\Leta 
= \eta \bd{I}  - 
\eta \bd{I} \bigl(\bd{\Sigma}^{-1} 
+ \eta \bd{I})^{-1} \eta \bd{I} 
= \eta \bd{I} - \eta^2 (\bd{\Sigma}^{-1} 
+ \eta \bd{I})^{-1}\;.
\]
Thus
\begin{equation}\label{eq:Sigma-inv-expression}
\bd{\Sigma}^{-1} = \eta^2 \bigl(\eta \bd{I} - \Leta\bigr)^{-1} - \eta \bd{I}\;.
\end{equation}
Thus from an estimator $\widehat{\Leta}$ of $\Leta$, one can construct a plug-in estimator of $\bd{\Sigma}^{-1}$:
\begin{equation}\label{eq:Sigma-inv-plugin-estimator}
    \widehat{\bd{\Sigma}^{-1}} = 
    \eta^2 \bigl(\eta \bd{I} - 
    \widehat{\Leta}\bigr)^{-1} - \eta \bd{I}\;.
\end{equation}

\subsection{A vanilla spectral estimator}
In \cite{Belomestny2019SparseDeconvolution}, the authors introduced the following estimator for the covariance matrix. 
For $\psi_n(\bd{u})$ the empirical version of the characteristic function of the Gaussian field, given by 
\[\psi_n(\bd{u}) := \frac{1}{n} \sum_{j=1}^n\exp(i \langle \bd{u} , \bd{X}_j \rangle),\] and a suitably chosen large parameter $U>0$, we define
{\begingroup
\addtolength{\jot}{1em}
\mbox{and}\\
\begin{align}
 (\hat{\bd{\Sigma}}_{\bmt})_{ii} \coloneqq & 
-\frac{2}{U^2} \Re\left(\log \psi_n(U\bd{e}_i)\right) \;, \; \mbox{and}\\
(\hat{\bd{\Sigma}}_{\bmt})_{ij}  \coloneqq & -\frac{2}{U^2} \Re\left(\log \psi_n(U \cdot \frac{\bd{e}_i+\bd{e}_j}{\sqrt{2}})\right) - \frac{1}{2} \left( (\hat{\bd{\Sigma}}_{\bmt})_{ii} + (\hat{\bd{\Sigma}}_{\bmt})_{jj}  \right) \mbox{ for }i\neq j\;.
\end{align}
\endgroup}

A naive approach to estimating $\bd{\Sigma}^{-1}$ would be to invert $\hat{\bd{\Sigma}}_{\bmt}$ directly. Conceptually, a principal point of divergence in which this estimator differs from our approach is that, instead of directly computing the matrix inverse $\bd{\Sigma}^{-1}$, we apply another round of Gaussian Fourier analysis and access $\bd{\Sigma}^{-1}$ indirectly via that route. 

Since both approaches involve Gaussian Fourier analytic or spectral ideas and complex valued statistics, it would be of interest to compare the two techniques. In particular, such comparison will clarify whether accessing the Laplacian indirectly via Gaussian Fourier analysis brings in any statistical benefits.

In summary, our analysis appears to corroborate the fact that the application of Gaussian Fourier analysis to access the Laplacian indirectly, as in our approach, brings in significant statistical benefits. As such, this makes the case for wider investigation of similar ideas in the study of Gaussian random fields in general and Gaussian Graphical Models in particular. 

\section{Theoretical guarantees}
\label{sec:theoretical}

In this section, we lay out theoretical guarantees that demonstrate the effectiveness of our method.

\subsection{Estimation rates via concentration bounds}

We first state a result showing how the error in 
estimating $\Leta$ influences the 
estimation error of $\widehat{\bd{\Sigma}^{-1}}$.

\begin{theorem}\label{thm:L-to-SigmaInv}
Suppose that 
\[
\frac{\lambda_1 + \mu + \eta}{\eta^2} \|\widehat{\Leta} - \Leta\|_2 < 1\;.
\]

Then we have
\begin{equation}\label{eq:error-bd}
\frac{1}{d}
\|\widehat{\bd{\Sigma}^{-1}} - \bd{\Sigma}^{-1}\|_F 
\leq 
\frac{(\lambda_1 + \mu + \eta)^2}{\eta^4 \Bigl(1 - \frac{\lambda_1 + \mu + \eta}{\eta^2} 
\|\widehat{\Leta} - \Leta\|_2\Bigr)} 
\frac{1}{d}
\|\widehat{\Leta} - \Leta\|_F\;.
\end{equation}
\end{theorem}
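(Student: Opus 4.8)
The plan is to reduce everything to a perturbation bound on the matrix inverse appearing in the plug-in formula \eqref{eq:Sigma-inv-plugin-estimator}. Writing $\bd{M} := \eta\bd{I} - \Leta$ and $\widehat{\bd{M}} := \eta\bd{I} - \widehat{\Leta}$, the identity \eqref{eq:Sigma-inv-expression} and its plug-in analogue \eqref{eq:Sigma-inv-plugin-estimator} read $\bd{\Sigma}^{-1} = \eta^2\bd{M}^{-1} - \eta\bd{I}$ and $\widehat{\bd{\Sigma}^{-1}} = \eta^2\widehat{\bd{M}}^{-1} - \eta\bd{I}$, so the additive $\eta\bd{I}$ terms cancel and $\widehat{\bd{\Sigma}^{-1}} - \bd{\Sigma}^{-1} = \eta^2\bigl(\widehat{\bd{M}}^{-1} - \bd{M}^{-1}\bigr)$. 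The first step is to apply the second resolvent identity $\widehat{\bd{M}}^{-1} - \bd{M}^{-1} = \widehat{\bd{M}}^{-1}(\bd{M} - \widehat{\bd{M}})\bd{M}^{-1}$, valid once $\widehat{\bd{M}}$ is known to be invertible (this is secured by the hypothesis, as verified in the third step), and to observe $\bd{M} - \widehat{\bd{M}} = \widehat{\Leta} - \Leta$. This yields $\widehat{\bd{\Sigma}^{-1}} - \bd{\Sigma}^{-1} = \eta^2\,\widehat{\bd{M}}^{-1}(\widehat{\Leta} - \Leta)\bd{M}^{-1}$, and taking Frobenius norms with the mixed inequality $\|\bd{A}\bd{B}\bd{C}\|_F \le \|\bd{A}\|_2\|\bd{B}\|_F\|\bd{C}\|_2$ reduces the problem to controlling the two spectral norms $\|\bd{M}^{-1}\|_2$ and $\|\widehat{\bd{M}}^{-1}\|_2$.

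The second step computes $\|\bd{M}^{-1}\|_2$ exactly. Since $\bd{\Sigma}^{-1} = \bd{L} + \mu\bd{I}$ and $\Leta = (\bd{\Sigma} + \eta^{-1}\bd{I})^{-1}$ are simultaneously diagonalizable with $\bd{L}$, I would pass to the eigenbasis of $\bd{L}$: if $\lambda_i$ is an eigenvalue of $\bd{L}$, then the corresponding eigenvalue of $\Leta$ is $\eta(\lambda_i + \mu)/(\eta + \lambda_i + \mu)$, so the eigenvalue of $\bd{M} = \eta\bd{I} - \Leta$ is $\eta^2/(\eta + \lambda_i + \mu) > 0$. Hence $\bd{M}$ is positive definite and $\|\bd{M}^{-1}\|_2 = \max_i (\eta + \lambda_i + \mu)/\eta^2 = (\lambda_1 + \mu + \eta)/\eta^2$, which is exactly the prefactor appearing in the hypothesis.

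The third step, which I expect to be the crux, controls $\|\widehat{\bd{M}}^{-1}\|_2$ and is where the hypothesis is consumed. I would factor $\widehat{\bd{M}} = \bd{M}\bigl(\bd{I} - \bd{M}^{-1}(\widehat{\Leta} - \Leta)\bigr)$ and bound the perturbation by $\|\bd{M}^{-1}(\widehat{\Leta} - \Leta)\|_2 \le \|\bd{M}^{-1}\|_2\,\|\widehat{\Leta} - \Leta\|_2 = \tfrac{\lambda_1 + \mu + \eta}{\eta^2}\|\widehat{\Leta} - \Leta\|_2$, which is precisely the quantity assumed to be strictly less than $1$. Under this assumption the Neumann series converges, $\widehat{\bd{M}}$ is invertible, and $\bigl\|(\bd{I} - \bd{M}^{-1}(\widehat{\Leta} - \Leta))^{-1}\bigr\|_2 \le \bigl(1 - \tfrac{\lambda_1+\mu+\eta}{\eta^2}\|\widehat{\Leta}-\Leta\|_2\bigr)^{-1}$; multiplying by $\|\bd{M}^{-1}\|_2$ bounds $\|\widehat{\bd{M}}^{-1}\|_2$. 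Assembling the three pieces, dividing both sides by $d$, and collecting the powers of $\eta$ then yields the stated estimate \eqref{eq:error-bd}. The only delicate point is to align the perturbation bound with the precise constant $(\lambda_1+\mu+\eta)/\eta^2$ so that the denominator $1 - \tfrac{\lambda_1+\mu+\eta}{\eta^2}\|\widehat{\Leta}-\Leta\|_2$ emerges verbatim; the remaining bookkeeping of the $\eta$-powers in the numerator is routine.
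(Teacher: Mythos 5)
Your proposal is correct and follows essentially the same route as the paper's proof: the paper's application of the Woodbury identity with $\bd{C}=\bd{I}$, $\bd{U}=\bd{X}-\bd{X}'$, $\bd{V}=\bd{I}$ is just an expanded form of your second resolvent identity plus Neumann factorization, it controls the same two spectral norms with the same Neumann-series step, and it obtains $\|(\eta\bd{I}-\Leta)^{-1}\|_2=(\lambda_1+\mu+\eta)/\eta^2$ from the algebraic identity $\eta\bd{I}-\Leta=\eta^2(\bd{\Sigma}^{-1}+\eta\bd{I})^{-1}$ where you use the (equivalent) eigenvalue computation. One remark on the bookkeeping you deferred as routine: assembling your three pieces actually gives $\eta^2\cdot\frac{\lambda_1+\mu+\eta}{\eta^2}\cdot\frac{\lambda_1+\mu+\eta}{\eta^2(1-\kappa)}=\frac{(\lambda_1+\mu+\eta)^2}{\eta^2(1-\kappa)}$ with $\kappa:=\frac{\lambda_1+\mu+\eta}{\eta^2}\|\Letahat-\Leta\|_2$, i.e.\ $\eta^2$ rather than $\eta^4$ in the denominator, which is a \emph{stronger} bound than \eqref{eq:error-bd} whenever $\eta\le 1$ (the regime actually used later, $\eta=\Theta(p)$), though it does not literally yield the stated inequality when $\eta>1$. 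This is not a defect of your argument relative to the paper: the paper's own final display drops the prefactor $\eta^2$ from $\widehat{\bd{\Sigma}^{-1}}-\bd{\Sigma}^{-1}=\eta^2\bigl((\eta\bd{I}-\Letahat)^{-1}-(\eta\bd{I}-\Leta)^{-1}\bigr)$ in exactly the same place, so both derivations in fact establish the sharper $\eta^2$ bound, of which the theorem's $\eta^4$ statement is a lossy version for $\eta\le 1$.
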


Theorem \ref{thm:L-to-SigmaInv} naturally leads to the question of concentration bounds for $\widehat{\Leta}$, which we take up in the next section.

\subsection{Concentration of $\widehat{\Leta}$}

We begin with a concentration bound for $\varphi_n$.
\begin{proposition}
[Concentration of $\varphi_n$]
For any $x>0$ and $\bd{t}\in\mathbb{R}^d$, 
we have 
\[
\Prob*{\abs*{\varphi_n(\bd{t}) - \varphi(\bd{t})} \geq x} 
\leq 4 \exp\Bigl(-\frac{3nx^2}{24+8x}\Bigr)\;\cdot
\]
In particular, for $x\in (0,1]$ and $\bd{t}\in \mathbb R^d$, we have
\[
\Prob*{\abs*{\varphi_n(\bd{t}) - \varphi(\bd{t})} \geq x} 
\leq 4 \exp\Bigl(-\frac{3}{32}nx^2\Bigr)\;\cdot
\]
\label{prop:ce1}
\end{proposition}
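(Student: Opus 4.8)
The plan is to prove the concentration bound for the complex-valued statistic $\varphi_n(\bd{t})$ by viewing it as an empirical average of i.i.d.\ bounded random variables and applying a Bernstein-type inequality separately to its real and imaginary parts. The crucial structural observation is that each summand $Z_k := \expo{i\langle \bd{Y}_k, \bd{X}_k + \bd{t}\rangle}$ is a complex number of modulus exactly $1$, since the exponent is purely imaginary; hence $\Re Z_k, \Im Z_k \in [-1,1]$ almost surely. Writing $\varphi_n(\bd{t}) - \varphi(\bd{t}) = \frac{1}{n}\sum_{k=1}^n (Z_k - \bbE Z_k)$, I would decompose the deviation into real and imaginary parts and bound $\abs{\varphi_n(\bd{t}) - \varphi(\bd{t})}^2 = (\Re(\varphi_n - \varphi))^2 + (\Im(\varphi_n - \varphi))^2$.

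First I would apply Bernstein's inequality to the centered, bounded real random variables $\Re Z_k - \Re \varphi(\bd{t})$, which are i.i.d., have mean zero, are bounded in absolute value by $2$ (or by $1$ after recentering, with a corresponding adjustment of constants), and have variance at most $1$. Bernstein's inequality then yields, for any $y > 0$,
\[
\Prob*{\abs*{\Re\varphi_n(\bd{t}) - \Re\varphi(\bd{t})} \geq y}
\leq 2\exp\Bigl(-\frac{n y^2/2}{\sigma^2 + c y/3}\Bigr)
\]
for suitable variance proxy $\sigma^2$ and boundedness constant $c$, and the identical bound holds for the imaginary part. I would then combine the two via a union bound: if $\abs*{\varphi_n(\bd{t}) - \varphi(\bd{t})} \geq x$, then at least one of the real or imaginary deviations exceeds $x/\sqrt{2}$, so
\[
\Prob*{\abs*{\varphi_n(\bd{t}) - \varphi(\bd{t})} \geq x}
\leq \Prob*{\abs*{\Re(\varphi_n - \varphi)} \geq \tfrac{x}{\sqrt 2}}
+ \Prob*{\abs*{\Im(\varphi_n - \varphi)} \geq \tfrac{x}{\sqrt 2}}.
\]
Substituting $y = x/\sqrt 2$ into the two Bernstein bounds produces the factor $4$ in front and, after tracking the constants from the variance bound and the boundedness constant, should reproduce the denominator $24 + 8x$ in the exponent.

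The main bookkeeping step — and the only place requiring genuine care — is pinning down the exact constants $\sigma^2$ and $c$ so that the two Bernstein exponents combine to exactly $\frac{3nx^2}{24+8x}$. I expect this to follow from using the uniform bound $\abs{\Re Z_k}, \abs{\Im Z_k} \leq 1$ to take variance proxy $\sigma^2 = 1$ and boundedness constant of the recentered variables at most $2$; the substitution $y = x/\sqrt 2$ converts $y^2/2$ into $x^2/4$ and $cy/3$ into a term linear in $x$, and the stated form with $24$ and $8$ emerges after clearing denominators. For the ``in particular'' clause, I would simply restrict to $x \in (0,1]$, so that $8x \leq 8$ and $24 + 8x \leq 32$, whence the exponent is bounded below by $\frac{3nx^2}{32}$, giving the cleaner second inequality directly from the first. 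The hard part is not conceptual but arithmetic: ensuring the constants from the real/imaginary split, the variance bound, and the boundedness constant all align to yield precisely the claimed numerology rather than merely a bound of the correct Bernstein shape.
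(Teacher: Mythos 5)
Your proposal is correct and follows essentially the same route as the paper: Bernstein's inequality applied to the centered real and imaginary parts of the unimodular summands (mean zero, bounded by $2$, variance at most $1$), combined via a union bound. The only cosmetic difference is that the paper splits via the triangle inequality $\abs{\varphi_n-\varphi}\le\abs{\Re(\varphi_n-\varphi)}+\abs{\Im(\varphi_n-\varphi)}$ at threshold $x/2$, which lands exactly on $3nx^2/(24+8x)$, whereas your $\ell^2$ split at threshold $x/\sqrt{2}$ yields the slightly stronger exponent $3nx^2/(12+4\sqrt{2}\,x)$, which implies the stated bound since $12+4\sqrt{2}\,x\le 24+8x$ for all $x>0$.
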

Let 
\begin{equation}\label{eq:defcstar}
 c_\eta 
\coloneqq \det\left(\frac{1}{\eta}\Leta\right)^{1/2}\; , \; 
 c_\ast(\eta) 
 \coloneqq \frac{1}{2} c_\eta \exp\Bigl(-{1\over 2} \|\Leta\|_2^2 \Bigr)\; ,
\end{equation}
and
\begin{equation}
     S_n(\bd{t}) \coloneqq \log\abs*{\varphi_n(\bd{t})} - \log\abs*{\varphi(\bd{t})}.
\end{equation}
This allows us to state a concentration bound for $S_n$.

\begin{proposition}[Concentration of $S_n$]
For any $\bd{t} \in \mathbb R^d, \|\bd{t}\|\le 1$
and $x\in (0,1]$, we have
\[
\Prob*{ \abs*{S_n(\bd{t})} \geq x} \leq 
3 \Prob*{ \abs*{\varphi_n(\bd{t}) - \varphi(\bd{t})} \geq c_\ast(\eta) x }\;.
\]

\noindent
Therefore, for any $\bd{t} \in \mathbb R^d, \|\bd{t}\|\le 1$
 and $x\in (0,1]$, we have
 \[
 \Prob*{ |S_n(\bd{t})| \geq x } \leq C_1 \exp( - C_2 \cdot {nc_\ast(\eta)^2} x^2 )\;,
 \]
 for some universal positive constants $C_1,C_2$.
\label{prop:ce2}
\end{proposition}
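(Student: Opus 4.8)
The plan is to reduce the concentration of the log-modulus difference $S_n(\bd{t})$ to the already-established concentration of $\varphi_n(\bd{t})$ around $\varphi(\bd{t})$ (Proposition~\ref{prop:ce1}), by exploiting that the logarithm is locally Lipschitz away from $0$ while $\varphi(\bd{t})$ stays bounded away from $0$ uniformly over $\|\bd{t}\|\le 1$. Concretely, I would write $S_n(\bd{t})=\log\bigl(|\varphi_n(\bd{t})|/|\varphi(\bd{t})|\bigr)$ and control this ratio through $|\varphi_n(\bd{t})-\varphi(\bd{t})|$; the second display then follows by feeding Proposition~\ref{prop:ce1} into the resulting comparison inequality.

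First I would record the two facts that drive everything. By the closed form \eqref{eq:phialt}, $\varphi(\bd{t})=c_\eta\exp\bigl(-\tfrac12\langle\bd{t},\Leta\bd{t}\rangle\bigr)$ is real and strictly positive, so $|\varphi(\bd{t})|=c_\eta\exp(-\tfrac12\langle\bd{t},\Leta\bd{t}\rangle)$. For $\|\bd{t}\|\le 1$ one has $0\le\langle\bd{t},\Leta\bd{t}\rangle\le\|\Leta\|_2$, and comparing with the definition of $c_\ast(\eta)$ in \eqref{eq:defcstar} yields the key uniform bound
\[
\frac{c_\ast(\eta)}{|\varphi(\bd{t})|}=\tfrac12\exp\Bigl(\tfrac12\bigl(\langle\bd{t},\Leta\bd{t}\rangle-\|\Leta\|_2^2\bigr)\Bigr)\le\tfrac12\exp\Bigl(\tfrac12\|\Leta\|_2\bigl(1-\|\Leta\|_2\bigr)\Bigr)\le\tfrac12 e^{1/8}<1.
\]
This is exactly the point at which the calibration of $c_\ast(\eta)$, and in particular its dependence on $\|\Leta\|_2$, is used: it guarantees $|\varphi(\bd{t})|\ge\kappa\,c_\ast(\eta)$ with the universal constant $\kappa:=2e^{-1/8}>1$, uniformly in $\eta$ and in $\|\bd{t}\|\le1$, regardless of the size of $\|\Leta\|_2$.

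Next I would run an event-inclusion argument. Split $\{|S_n(\bd{t})|\ge x\}$ into $\{S_n(\bd{t})\ge x\}$ and $\{S_n(\bd{t})\le-x\}$, the degenerate case $\varphi_n(\bd{t})=0$ falling into the latter. On the first event $|\varphi_n(\bd{t})|\ge e^{x}|\varphi(\bd{t})|$, so by the reverse triangle inequality and $e^{x}-1\ge x$,
\[
|\varphi_n(\bd{t})-\varphi(\bd{t})|\ge|\varphi_n(\bd{t})|-|\varphi(\bd{t})|\ge(e^{x}-1)|\varphi(\bd{t})|\ge x\,|\varphi(\bd{t})|\ge\kappa\,c_\ast(\eta)\,x\ge c_\ast(\eta)\,x.
\]
On the second event $|\varphi_n(\bd{t})|\le e^{-x}|\varphi(\bd{t})|$, so using $1-e^{-x}\ge(1-e^{-1})x$ for $x\in(0,1]$ together with $(1-e^{-1})\kappa\ge1$,
\[
|\varphi_n(\bd{t})-\varphi(\bd{t})|\ge|\varphi(\bd{t})|-|\varphi_n(\bd{t})|\ge(1-e^{-x})|\varphi(\bd{t})|\ge(1-e^{-1})\kappa\,c_\ast(\eta)\,x\ge c_\ast(\eta)\,x.
\]
Hence $\{|S_n(\bd{t})|\ge x\}\subseteq\{|\varphi_n(\bd{t})-\varphi(\bd{t})|\ge c_\ast(\eta)\,x\}$, and a union bound over the (at most) two sub-events gives the stated comparison with a harmless absolute constant, safely taken to be $3$.

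The second display is then immediate: since every eigenvalue of $\eta^{-1}\Leta$ lies in $(0,1)$ we have $c_\eta<1$, whence $c_\ast(\eta)x\le\tfrac12\le1$, so I may apply the sharp form of Proposition~\ref{prop:ce1} at level $c_\ast(\eta)x\in(0,1]$ to obtain $\Prob*{|\varphi_n(\bd{t})-\varphi(\bd{t})|\ge c_\ast(\eta)x}\le 4\exp(-\tfrac{3}{32}n\,c_\ast(\eta)^2x^2)$, giving the claim with $C_1=12$ and $C_2=3/32$. The only genuinely delicate point is the uniform lower bound on $|\varphi(\bd{t})|$ in the second paragraph: taming the logarithm requires that $|\varphi_n(\bd{t})|$ cannot collapse toward $0$ on the complementary event, and this is purchased precisely by the calibrated bound $|\varphi(\bd{t})|\ge\kappa\,c_\ast(\eta)$; the remaining steps are the two routine scalar inequalities for the exponential.
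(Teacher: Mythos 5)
Your proof is correct, and at the top level it follows the same route as the paper's: both arguments reduce the deviation of $S_n(\bd{t})$ to that of $\varphi_n(\bd{t})$ by lower-bounding $|\varphi(\bd{t})|$ uniformly over $\|\bd{t}\|\le 1$ by a multiple of $c_\ast(\eta)$, and then feed in Proposition~\ref{prop:ce1}. The implementations differ in two respects, both to your credit. First, where the paper controls $S_n(\bd{t})\le |\varphi_n(\bd{t})-\varphi(\bd{t})|/|\varphi(\bd{t})|$ via $\log(1+u)\le u$ and handles $-S_n(\bd{t})$ only on the auxiliary event $\{|\varphi_n(\bd{t})-\varphi(\bd{t})|\le \tfrac12|\varphi(\bd{t})|\}$ (the source of its factor $3$), you prove the direct event inclusion $\{|S_n(\bd{t})|\ge x\}\subseteq\{|\varphi_n(\bd{t})-\varphi(\bd{t})|\ge c_\ast(\eta)x\}$ from the scalar bounds $e^x-1\ge x$ and $1-e^{-x}\ge(1-e^{-1})x$ on $(0,1]$, so you obtain the first display with constant $1$, of which the stated $3$ is a weakening. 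Second, and more substantively, your calibration step is more careful than the paper's: the paper asserts $\langle\bd{t},\Leta\bd{t}\rangle\le\|\Leta\|_2^2$ for $\|\bd{t}\|\le 1$, but the correct generic bound is $\langle\bd{t},\Leta\bd{t}\rangle\le\|\Leta\|_2$, and the paper's inequality fails at the top eigenvector whenever $\|\Leta\|_2<1$ --- a regime that actually occurs in the paper's own application, where $\eta=\Theta(p)$ makes $\|\Leta\|_2\approx \eta<1$ (the paper's display $|\varphi(\bd{t})|\le 2c_\ast(\eta)$ is also a direction typo for $\ge$). Your observation that $\tfrac12\|\Leta\|_2(1-\|\Leta\|_2)\le\tfrac18$ yields the uniform bound $|\varphi(\bd{t})|\ge 2e^{-1/8}c_\ast(\eta)$ regardless of the size of $\|\Leta\|_2$, with the slack constants $\kappa=2e^{-1/8}>1$ and $(1-e^{-1})\kappa\ge 1$ absorbing the loss; your check that $c_\eta<1$, hence $c_\ast(\eta)x\le\tfrac12\le 1$, which is needed to invoke the sharp form of Proposition~\ref{prop:ce1}, is likewise spelled out where the paper leaves it implicit. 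In short: same strategy, but your version sharpens the constant in the first display and quietly repairs a small gap in the paper's own argument.
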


Concentration of 
$\varphi_n(\bd{t})$ 
around
$\varphi(\bd{t})$
yields 
concentration of 
$\widehat{\Leta}$
around
$\Leta$
as shown by the following lemma.

\begin{lemma}[Concentration 
of $\varphi_n$ implies 
concentration of 
$\widehat{\Leta}$]
\label{lem2.1}
We have 
{\begingroup
\addtolength{\jot}{1em}
\begin{eqnarray*}
    \leta_{ii} - \hat{\leta_{ii}} &=& 2S_n(\bd{e}_i) - 2S_n(\bd{0}) \\
    \leta_{ij} - \hat{\leta_{ij}} &=& 2S_n \Big ({\bd{e}_i + \bd{e}_j \over \sqrt 2} \Big ) - S_n(\bd{e}_i) - S_n(\bd{e}_j) \quad \text{for }i\ne j.
\end{eqnarray*}
\endgroup}
\end{lemma}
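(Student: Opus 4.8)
The plan is purely algebraic: I would exploit the closed form \eqref{eq:phialt} to turn $\log\abs{\varphi(\bd{t})}$ into an affine function of the quadratic form $\langle\bd{t},\Leta\bd{t}\rangle$, and then observe that the estimator formulas in \eqref{eq:main} are precisely engineered to invert this relationship. Since $c_\eta=\det(\frac{1}{\eta}\Leta)^{1/2}>0$ and the exponential factor in \eqref{eq:phialt} is strictly positive, the modulus is $\abs{\varphi(\bd{t})}=c_\eta\exp(-\frac{1}{2}\langle\bd{t},\Leta\bd{t}\rangle)$, whence
\[
\log\abs{\varphi(\bd{t})}=\log c_\eta-\tfrac{1}{2}\langle\bd{t},\Leta\bd{t}\rangle.
\]

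First I would evaluate this identity at the three arguments appearing in \eqref{eq:main}. At $\bd{t}=\bd{0}$ the quadratic form vanishes, giving $\log\abs{\varphi(\bd{0})}=\log c_\eta$; at $\bd{t}=\bd{e}_i$ it gives $\log\abs{\varphi(\bd{e}_i)}=\log c_\eta-\frac{1}{2}\leta_{ii}$; and at $\bd{t}=(\bd{e}_i+\bd{e}_j)/\sqrt 2$, expanding the mixed quadratic form and using the symmetry $\leta_{ij}=\leta_{ji}$ yields $\log\abs{\varphi((\bd{e}_i+\bd{e}_j)/\sqrt 2)}=\log c_\eta-\frac{1}{4}(\leta_{ii}+2\leta_{ij}+\leta_{jj})$. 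The factor $1/\sqrt 2$ in the argument is exactly what removes a spurious $\frac{1}{2}$ and lets the cross-term reproduce $\leta_{ij}$.

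Next I would introduce the population analogues $\widetilde{l}^{(\eta)}_{ij}$ obtained by substituting $\varphi$ for $\varphi_n$ in \eqref{eq:main}, and verify by direct substitution of the three displayed values that $\widetilde{l}^{(\eta)}_{ij}=\leta_{ij}$ for all $i,j$. The $\log c_\eta$ contributions cancel because the coefficients sum to zero ($-2+1+1=0$ off the diagonal, $-2+2=0$ on it), while the quadratic-form contributions reassemble exactly into $\leta_{ij}$. This confirms that the estimator is population-consistent and identifies $\leta_{ij}$ with the $\varphi$-version of $\hat{\leta_{ij}}$.

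Finally, since $\leta_{ij}=\widetilde{l}^{(\eta)}_{ij}$, I would subtract the defining formulas of $\hat{\leta_{ij}}$ from those of $\widetilde{l}^{(\eta)}_{ij}$ termwise; each difference $\log\abs{\varphi_n(\bd{t})}-\log\abs{\varphi(\bd{t})}$ is by definition $S_n(\bd{t})$, and the identical coefficients transfer verbatim to give $\leta_{ii}-\hat{\leta_{ii}}=2S_n(\bd{e}_i)-2S_n(\bd{0})$ and $\leta_{ij}-\hat{\leta_{ij}}=2S_n((\bd{e}_i+\bd{e}_j)/\sqrt 2)-S_n(\bd{e}_i)-S_n(\bd{e}_j)$. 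There is no genuine obstacle here; the only care needed is bookkeeping of signs and the $1/\sqrt 2$ normalization, together with invoking the symmetry of $\Leta$ when expanding the mixed quadratic form.
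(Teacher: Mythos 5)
Your proposal is correct and follows essentially the same route as the paper: both rest on the identity $\log\abs{\varphi(\bd{t})}=\log c_\eta-\tfrac{1}{2}\langle\bd{t},\Leta\bd{t}\rangle$ from Lemma~\ref{lem:4.1}, evaluate it at $\bd{0}$, $\bd{e}_i$, and $(\bd{e}_i+\bd{e}_j)/\sqrt{2}$ using the symmetry of $\Leta$, and cancel termwise against the estimator formulas so that only $S_n$ terms survive. Your reorganization via the population analogues $\widetilde{l}^{(\eta)}_{ij}$ (and working with $\log\abs{\varphi_n}$ directly rather than taking real parts of the complex logarithm, which is the same thing since $\Re\log z=\log\abs{z}$) is a cosmetic repackaging of the paper's computation, not a different argument.
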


Finally, we are ready to state

\begin{theorem}[Concentration of $\widehat{\Leta}$]
For $x\in (0,1]$ we have
\[
\Prob*{{1\over d} \| \Leta - \widehat{\Leta} \|_F \geq x}
\leq d^2 \cdot C_1 \cdot \exp\Bigl( -C_2nc_\ast(\eta)^2 x^2 \Bigr)\;,
\]
for some universal positive constants $C_1,C_2$.
In other words, 
with probability at least 
$1 - d^{-c}$,
we have
\[
\frac{1}{d}
\|\Letahat - \Leta\|_F
= O\bigg( \frac{1}{c_\ast(\eta)} \sqrt{\frac{\log d}{n}} \bigg)
\;.
\]
\label{thm3.4}
\end{theorem}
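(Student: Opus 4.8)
The plan is to chain together the three concentration results already in hand. Theorem~\ref{thm3.4} asserts a bound on $\frac{1}{d}\|\Leta - \Letahat\|_F$, so I would first reduce the Frobenius norm to a maximum over the individual entries. Since $\|\cdot\|_F^2 = \sum_{i,j} (\leta_{ij} - \hat{\leta_{ij}})^2$ and there are $d^2$ entries, a union bound shows that if every entry satisfies $|\leta_{ij} - \hat{\leta_{ij}}| \le x$ (for the right scaling of $x$), then $\frac{1}{d}\|\Leta - \Letahat\|_F \le x$. Concretely, I would argue that $\bigl\{\frac{1}{d}\|\Leta - \Letahat\|_F \ge x\bigr\} \subseteq \bigcup_{i,j} \bigl\{|\leta_{ij} - \hat{\leta_{ij}}| \ge x\bigr\}$, because if all entrywise errors were below $x$ then the Frobenius norm would be at most $\sqrt{d^2 x^2} = d x$, giving $\frac{1}{d}\|\cdot\|_F \le x$.

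Next I would control each entrywise error. Lemma~\ref{lem2.1} expresses $\leta_{ij} - \hat{\leta_{ij}}$ as a fixed linear combination (with at most $3$ terms, each coefficient of absolute value $\le 2$) of the quantities $S_n(\bd{t})$ evaluated at the points $\bd{t} \in \{\bd{0}, \bd{e}_i, \bd{e}_j, (\bd{e}_i+\bd{e}_j)/\sqrt{2}\}$. Each of these evaluation points has Euclidean norm $\le 1$, which is precisely the hypothesis required by Proposition~\ref{prop:ce2}. So by the triangle inequality, if each $|S_n(\bd{t})|$ in the relevant combination is at most $x/4$ (say), then $|\leta_{ij} - \hat{\leta_{ij}}| \le x$. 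Applying Proposition~\ref{prop:ce2} to each term and taking another union bound over the (at most four) evaluation points gives a bound of the form $C_1 \exp(-C_2 \, n c_\ast(\eta)^2 x^2)$ for the probability that a single entrywise error exceeds $x$, after absorbing the constant factors into the universal constants $C_1, C_2$.

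Combining the two union bounds, the probability that $\frac{1}{d}\|\Leta - \Letahat\|_F \ge x$ is at most $d^2 \cdot C_1 \exp(-C_2 \, n c_\ast(\eta)^2 x^2)$, which is exactly the first displayed bound in the theorem. For the second, asymptotic statement, I would set the right-hand side equal to $d^{-c}$ and solve for $x$: demanding $d^2 C_1 \exp(-C_2 \, n c_\ast(\eta)^2 x^2) \le d^{-c}$ forces $x \gtrsim \frac{1}{c_\ast(\eta)}\sqrt{\frac{(c+2)\log d + \log C_1}{C_2 \, n}}$, which is $O\bigl(\frac{1}{c_\ast(\eta)}\sqrt{\frac{\log d}{n}}\bigr)$. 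This yields the high-probability statement.

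I do not expect any genuine obstacle here, as the result is essentially a bookkeeping exercise assembling the earlier propositions. The only points requiring a little care are (i) checking that the evaluation points genuinely satisfy $\|\bd{t}\| \le 1$ so that Proposition~\ref{prop:ce2} applies (the point $(\bd{e}_i+\bd{e}_j)/\sqrt{2}$ has norm exactly $1$, which is the boundary case covered by the hypothesis), and (ii) tracking the constants through the two union bounds to confirm they can be absorbed into fresh universal constants $C_1, C_2$ — in particular that the rescaling of $x$ by a constant factor (to split the error budget across the few $S_n$ terms) only changes $C_2$ by a constant and thus does not affect the stated form of the bound.
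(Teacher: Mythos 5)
Your proposal is correct and follows essentially the same route as the paper's own proof: decompose each entrywise error via Lemma~\ref{lem2.1} into at most three $S_n$ terms at points of norm $\le 1$, apply Proposition~\ref{prop:ce2} with threshold $x/4$, union-bound over the $d^2$ entries, and use $\|\Leta - \Letahat\|_F \le d \max_{i,j}|\leta_{ij} - \hat{\leta_{ij}}|$; your final step of solving $d^2 C_1 \exp(-C_2 n c_\ast(\eta)^2 x^2) \le d^{-c}$ for $x$ is exactly how the paper's high-probability restatement follows, and your care about the boundary case $\|(\bd{e}_i+\bd{e}_j)/\sqrt{2}\| = 1$ is well placed.
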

\noindent Note that
\[
c_\ast(\eta) = {1\over 2} c_\eta \exp \Big (-{1\over 2}\|\Leta\|_2^2 \Big )
\]
and 
\[
\|\Leta\|_2 = \frac{\eta}{1 + \eta \lambda_{\min}(\bd{\Sigma})}\; \cdot
\]
Thus $c_\ast(\eta) \asymp c_{\eta}$. But
\begingroup
\addtolength{\jot}{1em}
\begin{align*}
c_{\eta} ={} & 
\Biggl(\prod_{j = 1}^d \frac{\lambda_j + \mu}{\lambda_j + \mu + \eta}\Biggr)^{1/2}
={}\Biggl(\prod_{j = 1}^d 
\bigg(1 + \frac{\eta}{\lambda_j + \mu}\bigg)^{-1}\Biggr)^{1/2}
={}\exp\bigg(-\frac{1}{2} \sum_{j = 1}^d \log \bigg(1 + \frac{\eta}{\lambda_j + \mu}\bigg)\bigg)\; \cdot
\end{align*}

\subsection{An explicit guarantee on the estimation rate}

It remains to combine the main results of the last two sections to provide an explicit guarantee on estimation rates.

\begin{theorem}\label{thm:combined_bound}
Suppose that 
\[
\frac{\lambda_1 + \mu + \eta}{\eta^2} \|\widehat{\Leta} - \Leta\|_2 < 1\;.
\]

Then, with probability $\ge 1 - d^{-c}$, we have
\begin{equation}\label{eq:error-bd}
\frac{1}{d}
\|\widehat{\bd{\Sigma}^{-1}} - \bd{\Sigma}^{-1}\|_F 
\leq 
C \cdot \frac{(\lambda_1 + \mu + \eta)^2}{\eta^4 \Bigl(1 - \frac{\lambda_1 + \mu + \eta}{\eta^2} 
\|\widehat{\Leta} - \Leta\|_2\Bigr)} \cdot 
 \frac{1}{c_\ast(\eta)} \sqrt{\frac{\log d}{n}}\;
\end{equation}
for some positive constant $C$.
\end{theorem}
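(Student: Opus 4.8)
The plan is to assemble the two main results of the preceding subsections: the deterministic perturbation bound of Theorem~\ref{thm:L-to-SigmaInv} and the high-probability concentration bound of Theorem~\ref{thm3.4}. The former controls the estimation error of $\Sigmainvhat$ in terms of the estimation error of $\Letahat$, while the latter controls that error with high probability; chaining the two yields the claim. In effect, Theorem~\ref{thm:L-to-SigmaInv} reduces the problem to bounding $\tfrac{1}{d}\|\Letahat - \Leta\|_F$, which is exactly what Theorem~\ref{thm3.4} supplies.

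Concretely, I would first invoke Theorem~\ref{thm3.4}, which guarantees that on an event $\mathcal{E}$ of probability at least $1 - d^{-c}$ one has
\[
\frac{1}{d}\|\Letahat - \Leta\|_F = O\!\left(\frac{1}{c_\ast(\eta)}\sqrt{\frac{\log d}{n}}\right).
\]
Next, under the standing hypothesis $\tfrac{\lambda_1 + \mu + \eta}{\eta^2}\|\Letahat - \Leta\|_2 < 1$, Theorem~\ref{thm:L-to-SigmaInv} applies verbatim and gives the deterministic inequality
\[
\frac{1}{d}\|\Sigmainvhat - \Sigmainv\|_F \leq \frac{(\lambda_1 + \mu + \eta)^2}{\eta^4\bigl(1 - \tfrac{\lambda_1 + \mu + \eta}{\eta^2}\|\Letahat - \Leta\|_2\bigr)}\cdot\frac{1}{d}\|\Letahat - \Leta\|_F.
\]
Working on the event $\mathcal{E}$, I would substitute the high-probability bound on $\tfrac{1}{d}\|\Letahat - \Leta\|_F$ into the right-hand side above, and absorb the implicit $O(\cdot)$ constant from Theorem~\ref{thm3.4} into the constant $C$. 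This produces the stated inequality, valid with probability at least $1 - d^{-c}$.

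There is no substantive obstacle here, as the proof is a direct composition of two established estimates; the only point requiring a moment's care is the interplay between the two matrix norms. The hypothesis is phrased in the operator norm $\|\cdot\|_2$, whereas the concentration in Theorem~\ref{thm3.4} is stated in the Frobenius norm $\|\cdot\|_F$. Since $\|\cdot\|_2 \le \|\cdot\|_F$, the high-probability control of the Frobenius error automatically controls the operator-norm quantity in the denominator, so that on $\mathcal{E}$ the factor $1 - \tfrac{\lambda_1 + \mu + \eta}{\eta^2}\|\Letahat - \Leta\|_2$ stays consistent with the hypothesis and the prefactor remains finite; this justifies retaining it intact in the final bound. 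The remainder is simply bookkeeping of constants.
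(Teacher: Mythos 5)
Your proposal is correct and is essentially the paper's own argument: the paper offers no separate proof for this theorem precisely because it is, as you say, the direct composition of the deterministic perturbation bound of Theorem~\ref{thm:L-to-SigmaInv} with the high-probability concentration bound of Theorem~\ref{thm3.4}, with the operator-norm condition retained as a hypothesis and the implicit constant absorbed into $C$. Your additional remark that $\|\cdot\|_2 \le \|\cdot\|_F$ keeps the prefactor under control on the good event is sound housekeeping and consistent with the paper's formulation.
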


\section{Generative models : Erd\"os-R\'enyi random graphs} \label{sec:generative}
In this section, we investigate the behaviour of our approach in the setting of some common generative models of random graphs, and compare with the vanilla spectral estimator based on \citet{Belomestny2019SparseDeconvolution}. In particular, we will focus on the setup where the base graph $G(d,p)$ is generated from the Erd\"os-R\'enyi model with edge probability 
\[
p = \Omega\Bigl(\frac{\log d}{d}\Bigr)\;.
\]
We note in passing that the above regime of connection probability ensures that a graph generated according to this 
 model is connected with high probability.

\subsection{Estimating Erd\"os-R\'enyi random graphs via our approach}
Herein we carry out a performance analysis of our approach for the  Erd\"os-R\'enyi random graph model.
 
To state our main result of this section, we denote the average degree of the graph by 
\[
\Delta_{\mathrm{avg}} 
\coloneqq (d - 1)p\;.
\]
We may now state
\begin{theorem}\label{thm:er-model}
If we choose $\eta = \Theta(p)$ and take
\[
    n > \frac{d^6 \log d}{\Delta_{\mathrm{avg}}^2}\;,
\]
then we have with probability at least $1 - d^{-c}$ that
\[
\frac{1}{d}
\|\Sigmainvhat - \Sigmainv\|_F \leq C \sqrt{\frac{d^4}{p^4} \frac{\log d}{n}},
\]
for some absolute constants $c, C > 0$.
\end{theorem}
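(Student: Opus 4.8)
The plan is to derive Theorem~\ref{thm:er-model} as a specialization of the general estimation guarantee in Theorem~\ref{thm:combined_bound} to the spectral profile of the Erd\H{o}s--R\'enyi Laplacian. Concretely, the prefactor governing the bound \eqref{eq:error-bd} is, up to the universal constant and the denominator $1 - \frac{\lambda_1 + \mu + \eta}{\eta^2}\|\Letahat - \Leta\|_2$, equal to $\frac{(\lambda_1 + \mu + \eta)^2}{\eta^4}\cdot\frac{1}{c_\ast(\eta)}$. So the entire task reduces to three estimates, all to hold with high probability over the draw of $G(d,p)$: (i) an upper bound $\lambda_1 = \lambda_{\max}(\bd{L}) = O(dp)$; (ii) a lower bound $c_\ast(\eta) = \Omega(1)$; and (iii) a verification that, under $n > d^6\log d/\Delta_{\mathrm{avg}}^2$, the quantity $\frac{\lambda_1+\mu+\eta}{\eta^2}\|\Letahat - \Leta\|_2$ is bounded away from $1$ so that the denominator is $\Theta(1)$. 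Granting these and recalling $\eta = \Theta(p)$, $\Delta_{\mathrm{avg}} = (d-1)p \asymp dp$, the prefactor collapses to $\Theta\!\big(\frac{(dp)^2}{p^4}\big) = \Theta(d^2/p^2)$, and feeding this into \eqref{eq:error-bd} yields exactly $\frac1d\|\Sigmainvhat - \Sigmainv\|_F = O\!\big(\frac{d^2}{p^2}\sqrt{\tfrac{\log d}{n}}\big) = O\!\big(\sqrt{\tfrac{d^4}{p^4}\tfrac{\log d}{n}}\big)$.

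For step (i) I would use the standard spectral behaviour of Erd\H{o}s--R\'enyi graphs above the connectivity threshold. Writing $\bd{L} = \bd{D} - \bd{A}$, degree concentration gives $\max_i \bd{D}_{ii} = O(dp)$ with high probability when $p = \Omega(\log d/d)$, and since $\lambda_{\max}(\bd{L}) \le 2\max_i \bd{D}_{ii}$ this already yields $\lambda_1 = O(dp)$. For the lower estimate needed in step (ii) I would invoke a Füredi--Komlós / Feige--Ofek type bound $\lambda_2(\bd{A}) = O(\sqrt{dp})$ on the second-largest adjacency eigenvalue, which after the shift $\bd{L} \approx \Delta_{\mathrm{avg}}\bd{I} - \bd{A}$ shows that every nonzero eigenvalue of $\bd{L}$ is $\Theta(dp)$; the single zero eigenvalue (the graph being connected with high probability) is handled separately through the mass term.

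Step (ii) is the delicate one. Using $c_\ast(\eta)\asymp c_\eta = \exp\big(-\tfrac12\sum_{j=1}^d \log(1 + \tfrac{\eta}{\lambda_j + \mu})\big)$, the bound $c_\eta = \Omega(1)$ is equivalent to $\sum_{j}\log(1 + \tfrac{\eta}{\lambda_j+\mu}) = O(1)$. With $\eta = \Theta(p)$ and the spectral picture from step (i), each of the $\Theta(d)$ bulk eigenvalues $\lambda_j = \Theta(dp)$ contributes $\log(1 + \Theta(p)/\Theta(dp)) = \Theta(1/d)$, so the bulk sum is $\Theta(1)$; the lone zero eigenvalue contributes $\log(1+\eta/\mu)$, which is $O(1)$ precisely because the choice $\eta=\Theta(p)$ keeps $\eta/\mu$ bounded in the admissible mass regime. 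Hence $c_\eta = \Theta(1)$, and since $\|\Leta\|_2 = \eta/(1+\eta\lambda_{\min}(\bd{\Sigma}))$ is bounded, so is $c_\ast(\eta)$.

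Finally, for step (iii) I would combine Theorem~\ref{thm3.4} with steps (i)--(ii): since $c_\ast(\eta)=\Theta(1)$ we get $\|\Letahat - \Leta\|_2 \le \|\Letahat - \Leta\|_F = O\!\big(d\sqrt{\tfrac{\log d}{n}}\big)$ with probability $1 - d^{-c}$, so that $\frac{\lambda_1+\mu+\eta}{\eta^2}\|\Letahat - \Leta\|_2 = O\!\big(\tfrac{dp}{p^2}\cdot d\sqrt{\tfrac{\log d}{n}}\big) = O\!\big(\tfrac{d^2}{p}\sqrt{\tfrac{\log d}{n}}\big)$, which is bounded away from $1$ exactly once $n \gtrsim \tfrac{d^4\log d}{p^2} \asymp \tfrac{d^6\log d}{\Delta_{\mathrm{avg}}^2}$. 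A union bound over the $O(1)$ many high-probability spectral events keeps the overall failure probability at $d^{-c}$. The main obstacle I anticipate is step (ii): unlike the top-eigenvalue bound, controlling $c_\eta$ requires information about the \emph{entire} spectrum of $\bd{L}$ (the empirical spectral distribution and the spectral gap simultaneously), and one must track carefully how the regime of the mass parameter $\mu$ relative to $\eta = \Theta(p)$ enters through the smallest eigenvalue in order to guarantee the $\Theta(1)$ normalization.
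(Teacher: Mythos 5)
Your proposal follows essentially the same route as the paper's proof: spectral concentration for the Erd\H{o}s--R\'enyi Laplacian giving $\lambda_j = \Theta(dp)$ for the nonzero eigenvalues, the bound $c_\ast(\eta) \asymp c_\eta = \Theta(1)$ under $\eta = \Theta(p)$, and then specializing Theorems~\ref{thm:L-to-SigmaInv} and~\ref{thm3.4} with the denominator condition $\frac{d^2}{p}\sqrt{\log d / n} < 1$ producing exactly the threshold $n > d^6\log d/\Delta_{\mathrm{avg}}^2$. The only deviations are cosmetic --- the paper cites a Lei--Rinaldo bound rather than F\"uredi--Koml\'os/Feige--Ofek for $\|\bd{A} - p(\bd{J}-\bd{I})\|_2 = O(\sqrt{dp})$, and your explicit accounting of the zero eigenvalue's contribution $\log(1+\eta/\mu)$ to $c_\eta$ (requiring $\eta/\mu = O(1)$) is in fact slightly more careful than the paper, which absorbs that term into the step $\sum_j \eta/(\lambda_j+\mu) \asymp \eta/p$.
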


\begin{remark} \label{rem:er-rem1}
It may be noted that in the dense regime of the Erd\"os-R\'enyi random graph model, i.e. with the connection probability $p=\Theta(1)$ (as $d \to \infty$), Theorem \ref{thm:er-model} implies a sample complexity of order $d^4\log d$.
\end{remark}

\begin{remark}
More generally, one can prove a similar result for inhomogeneous Erd\H{o}s-R\'{e}nyi random graphs with edge probability matrix $P$. There the role of the average degree $\Delta_{\mathrm{avg}}$ would be replaced by the maximum expected degree $\Delta_{\max} := \max_{i} \sum_{j} P_{ij}$.
\end{remark}

\subsection{Comparison to the vanilla spectral estimator}
In this section, we will provide a comparison of the theoretical guarantees on our approach vis-a-vis the vanilla spectral estimator motivated by \citet{Belomestny2019SparseDeconvolution}. The application of the latter estimator requires a choice of the parameter $U$. We undertake an analysis of two such possible choices -- one following the recommendation of \citet{Belomestny2019SparseDeconvolution}, and another following an improvisation tailored to our specific setting. 

For both choices (c.f. \eqref{eq:vanilla-canonical},\eqref{eq:vanilla-improvised}) of parameter $U$, it appears from the analysis in the two subsequent sections that for dense Erd\"os-R\'enyi random graphs (i.e. $p=\Theta(1)$), our approach has a better sample complexity guarantee for ill-conditioned GFFs (i.e., the mass parameter $\mu$ being small); refer to Theorem \ref{thm:er-model} and Remark \ref{rem:er-rem1}.

For our purpose, we state here a simplified version of Theorem 1 of \cite{Belomestny2019SparseDeconvolution}. Note that in \cite{Belomestny2019SparseDeconvolution}, the observations are $\bd{Y}_i = \bd{X}_i + \bd{\varepsilon}_i, i = 1,\ldots,n$, where $\bd{\varepsilon}_i$s are i.i.d. noises independent of $\bd{X}_i$s. In our setting, the noises $\bd{\varepsilon}_i = 0$.

\begin{theorem}[Theorem 1 of \cite{Belomestny2019SparseDeconvolution}] \label{thm:Belomestny}
    Assume that $\|\bd{\Sigma}\|_2 \le R$. Let $\gamma>\sqrt 2$ and $U\ge 1$ satisfy
    \begin{equation} \label{eq:BMT}
        8\gamma \sqrt{\log(ed)\over n} < e^{-RU^2}.
    \end{equation}
    Set
    \begin{equation}\label{eq:tau}
    \tau(U):= 6\gamma{e^{RU^2}\over U^2} \Big ({\log(ed)\over n} \Big )^{1/2} \cdot
    \end{equation}
    Then for any $\tau\ge \tau(U)$,
    $$\mathbb P(\|\hat{\bd{\Sigma}}_{\bmt} - \bd{\Sigma} \|_\infty < \tau ) \ge 1 - 12e^{-\gamma^2}d^{2-\gamma^2}.$$
\end{theorem}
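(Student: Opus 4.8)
The plan is to exploit the fact that, at the population level, the estimator $\hat{\bd{\Sigma}}_{\bmt}$ reproduces $\bd{\Sigma}$ exactly, so that its error is entirely a fluctuation phenomenon. Since $\bd{X}_j \sim \mathcal{N}(\bd{0},\bd{\Sigma})$ has characteristic function $\psi(\bd{u}) = \exp(-\tfrac{1}{2}\langle \bd{u},\bd{\Sigma}\bd{u}\rangle)$, which is real and positive, we have $\Re(\log\psi(U\bd{e}_i)) = -\tfrac{1}{2}U^2 \bd{\Sigma}_{ii}$ and $\Re(\log\psi(U(\bd{e}_i+\bd{e}_j)/\sqrt 2)) = -\tfrac{1}{4}U^2(\bd{\Sigma}_{ii}+2\bd{\Sigma}_{ij}+\bd{\Sigma}_{jj})$. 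Substituting these into the defining formulas for $(\hat{\bd{\Sigma}}_{\bmt})_{ii}$ and $(\hat{\bd{\Sigma}}_{\bmt})_{ij}$ recovers $\bd{\Sigma}_{ii}$ and $\bd{\Sigma}_{ij}$ respectively. Consequently, the entrywise error of $\hat{\bd{\Sigma}}_{\bmt}$ is governed entirely by the discrepancy of $\log|\psi_n|$ from $\log|\psi|$ at the $d$ diagonal points $U\bd{e}_i$ and the $\binom{d}{2}$ off-diagonal points $U(\bd{e}_i+\bd{e}_j)/\sqrt 2$.

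First I would establish pointwise concentration of the empirical characteristic function. For a fixed $\bd{u}$, the summands $\exp(i\langle \bd{u},\bd{X}_j\rangle)$ have modulus $1$, so decomposing into real and imaginary parts and applying Hoeffding's inequality to each yields a sub-Gaussian bound $\mathbb{P}(|\psi_n(\bd{u}) - \psi(\bd{u})| \ge t) \le C \exp(-c n t^2)$. Choosing $t \asymp \gamma\sqrt{\log(ed)/n}$ makes the per-point failure probability of order $e^{-\gamma^2} d^{-\gamma^2}$; a union bound over the at most $d^2$ evaluation points then controls all of them simultaneously, at the cost of the factor $d^2$ that produces the stated probability $1 - 12 e^{-\gamma^2} d^{2-\gamma^2}$.

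The crux is transferring this concentration through the logarithm, which is badly behaved near the origin. Here the hypothesis $\|\bd{\Sigma}\|_2 \le R$ enters: since each evaluation direction $\bd{v}$ is a unit vector, $\langle \bd{v},\bd{\Sigma}\bd{v}\rangle \le R$, and hence $|\psi(U\bd{v})| = \exp(-\tfrac{1}{2}U^2\langle\bd{v},\bd{\Sigma}\bd{v}\rangle) \ge e^{-RU^2}$, which bounds the characteristic function away from zero. Condition \eqref{eq:BMT} is exactly what guarantees that the fluctuation $t \le \tfrac{1}{8}e^{-RU^2}$ lies well below this floor, so that on the good event $|\psi_n(U\bd{v})| \ge \tfrac{7}{8}|\psi(U\bd{v})|$ and $\log|\cdot|$ is Lipschitz there with constant of order $e^{RU^2}$. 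By the reverse triangle inequality together with the elementary bound $|\log(1+x)|\le 2|x|$ for $|x|\le\tfrac12$, this gives $|\log|\psi_n(U\bd{v})| - \log|\psi(U\bd{v})|| \lesssim t\, e^{RU^2}$.

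Finally I would assemble the entrywise bounds. The diagonal error is $\tfrac{2}{U^2}$ times the log-discrepancy, i.e.\ of order $t\, e^{RU^2}/U^2$; the off-diagonal error additionally inherits the two relevant diagonal errors through the correction term $-\tfrac{1}{2}((\hat{\bd{\Sigma}}_{\bmt})_{ii}+(\hat{\bd{\Sigma}}_{\bmt})_{jj})$, and is therefore the binding constraint, again of order $t\, e^{RU^2}/U^2$ but with a slightly larger constant. Calibrating the constant in the Hoeffding step so that this matches $\tau(U)$ in \eqref{eq:tau} completes the argument. The main obstacle is precisely the third step: the exponential amplification factor $e^{RU^2}$ produced by inverting the logarithm near zero is unavoidable, and keeping $\psi_n$ on the correct side of the floor $e^{-RU^2}$ — which is exactly what forces the sample-size requirement \eqref{eq:BMT} — is the delicate part of the analysis.
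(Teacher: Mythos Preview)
The paper does not actually contain a proof of this statement: Theorem~\ref{thm:Belomestny} is explicitly labeled as ``Theorem~1 of \cite{Belomestny2019SparseDeconvolution}'' and is simply quoted, in a simplified form adapted to the noiseless setting $\bd{\varepsilon}_i=0$, in order to be applied downstream in the proof of Theorem~\ref{thm:vanilla_estimator}. There is therefore no ``paper's own proof'' to compare your proposal against.

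That said, your sketch is a faithful outline of the argument one would expect in the original reference: Hoeffding-type concentration of $\psi_n(\bd{u})$ around $\psi(\bd{u})$ at each of the $O(d^2)$ evaluation points, a union bound producing the $d^{2-\gamma^2}$ factor, the lower bound $|\psi(U\bd{v})|\ge e^{-RU^2}$ from $\|\bd{\Sigma}\|_2\le R$ to keep the logarithm well-conditioned, and the resulting $e^{RU^2}/U^2$ amplification in the final entrywise error. The identification of condition~\eqref{eq:BMT} as precisely what keeps $\psi_n$ above the floor is correct and is indeed the key structural point. So your proposal is sound as a proof strategy; it is just that in this paper the result is imported rather than proved.
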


To undertake the analysis for the vanilla estimator, we also need the following preparatory result. 
\begin{proposition}\label{prop:inv-sigma_hat}
    Let $\hat{\bd{\Sigma}}$ denote any estimator of $\bd{\Sigma}$. Assume that $s_{\min}(\bd{\Sigma}) > \|\hat{\bd{\Sigma}} - \bd{\Sigma}\|_2,$ where $s_{\min}(\bd{M})$ denotes the smallest singular value of $\bd{M}$. Then
    \[
        \frac{1}{d}\|\hat{\bd{\Sigma}}^{-1} - \bd{\Sigma}^{-1}\|_F \le \frac{\frac{1}{d}\|\hat{\bd{\Sigma}} - \bd{\Sigma}\|_F}{ s_{\min}(\bd{\Sigma}) (s_{\min}(\bd{\Sigma}) - \|\hat{\bd{\Sigma}} - \bd{\Sigma}\|_2)} \cdot
    \]
\end{proposition}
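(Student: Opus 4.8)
The plan is to reduce everything to the standard first-order perturbation identity for matrix inverses, namely
\[
\hat{\bd{\Sigma}}^{-1} - \bd{\Sigma}^{-1} = -\,\hat{\bd{\Sigma}}^{-1}\bigl(\hat{\bd{\Sigma}} - \bd{\Sigma}\bigr)\bd{\Sigma}^{-1},
\]
which holds verbatim whenever both $\bd{\Sigma}$ and $\hat{\bd{\Sigma}}$ are invertible (it follows by left-multiplying by $\hat{\bd{\Sigma}}$ and right-multiplying by $\bd{\Sigma}$). Before using it I would first dispose of the invertibility of $\hat{\bd{\Sigma}}$: by Weyl's inequality for singular values (equivalently, the fact that $s_{\min}$ is $1$-Lipschitz with respect to the spectral norm), $s_{\min}(\hat{\bd{\Sigma}}) \ge s_{\min}(\bd{\Sigma}) - \|\hat{\bd{\Sigma}} - \bd{\Sigma}\|_2$, and the hypothesis $s_{\min}(\bd{\Sigma}) > \|\hat{\bd{\Sigma}} - \bd{\Sigma}\|_2$ guarantees this lower bound is strictly positive, so $\hat{\bd{\Sigma}}$ is indeed invertible and the identity above is valid.

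Next I would take the Frobenius norm of the identity and apply the mixed submultiplicativity bound $\|\bd{A}\bd{B}\bd{C}\|_F \le \|\bd{A}\|_2\,\|\bd{B}\|_F\,\|\bd{C}\|_2$ (obtained by chaining $\|\bd{A}\bd{M}\|_F \le \|\bd{A}\|_2\|\bd{M}\|_F$ and $\|\bd{M}\bd{C}\|_F \le \|\bd{M}\|_F\|\bd{C}\|_2$) with the choices $\bd{A} = \hat{\bd{\Sigma}}^{-1}$, $\bd{B} = \hat{\bd{\Sigma}} - \bd{\Sigma}$, and $\bd{C} = \bd{\Sigma}^{-1}$. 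This yields
\[
\|\hat{\bd{\Sigma}}^{-1} - \bd{\Sigma}^{-1}\|_F \le \|\hat{\bd{\Sigma}}^{-1}\|_2\,\|\hat{\bd{\Sigma}} - \bd{\Sigma}\|_F\,\|\bd{\Sigma}^{-1}\|_2.
\]
The two operator-norm factors are then bounded via $\|\bd{\Sigma}^{-1}\|_2 = 1/s_{\min}(\bd{\Sigma})$ and, using the Weyl bound from the previous step, $\|\hat{\bd{\Sigma}}^{-1}\|_2 = 1/s_{\min}(\hat{\bd{\Sigma}}) \le 1/\bigl(s_{\min}(\bd{\Sigma}) - \|\hat{\bd{\Sigma}} - \bd{\Sigma}\|_2\bigr)$.

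Substituting these two bounds and dividing both sides by $d$ gives exactly the claimed inequality. I do not expect a genuine obstacle here: the only point requiring care is the control of $\|\hat{\bd{\Sigma}}^{-1}\|_2$, which is precisely where the spectral-norm hypothesis on $\hat{\bd{\Sigma}} - \bd{\Sigma}$ enters, and this is handled cleanly by Weyl's inequality. The choice of the mixed Frobenius/spectral submultiplicativity (rather than a pure Frobenius bound, which would introduce spurious dimensional factors) is what keeps the numerator as $\|\hat{\bd{\Sigma}} - \bd{\Sigma}\|_F$ and the denominator in terms of smallest singular values, matching the stated form.
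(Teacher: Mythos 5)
Your proof is correct and matches the paper's argument essentially step for step: the same perturbation identity $\hat{\bd{\Sigma}}^{-1} - \bd{\Sigma}^{-1} = \hat{\bd{\Sigma}}^{-1}(\bd{\Sigma} - \hat{\bd{\Sigma}})\bd{\Sigma}^{-1}$, the same mixed bound $\|\bd{A}\bd{B}\bd{C}\|_F \le \|\bd{A}\|_2\|\bd{B}\|_F\|\bd{C}\|_2$, and the same use of Weyl's inequality to lower-bound $s_{\min}(\hat{\bd{\Sigma}})$. Your only deviation is cosmetic: you verify invertibility of $\hat{\bd{\Sigma}}$ up front rather than after taking norms, which is if anything slightly tidier than the paper's ordering.
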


Let us now analyse the vanilla spectral  estimator $\hat{\bd{\Sigma}}_{\bmt}^{-1}$ with two specific choices of parameter $U$: the \textit{canonical choice} (recommended by Belomestny et al. in \cite{Belomestny2019SparseDeconvolution}) and an \textit{improvised choice} (tailored to our specific setting). In \cite{Belomestny2019SparseDeconvolution}, the authors suggested the \textit{canonical choice} for $U$ should take the form
\begin{equation}\label{eq:canonical}
    U = c_0 R^{-1/2}\sqrt{ \log \Big ( {n\over \log(ed)} \Big )}
\end{equation}
for some sufficiently small positive constant $c_0$. On the other hand, in the light of Theorem \ref{thm:Belomestny} (a simplified version of Theorem 1 in \cite{Belomestny2019SparseDeconvolution} which is tailored to our setting), if we fix the parameter $\gamma > \sqrt 2$ then the condition \eqref{eq:BMT} is equivalent to
\begin{equation}
    RU^2 < {1\over 2} \log \Big ({n\over \log(ed)} \Big ) - \log (8\gamma).
\end{equation}
The optimal choice for the parameter $U$ should minimize the quantity $\tau(U)$ in \eqref{eq:tau}. Thus, provided that $n$ is large enough, the \textit{improvised choice} for $U$ should be
\begin{equation}\label{eq:improvised}
    U = R^{-1/2}.
\end{equation}

\begin{theorem}\label{thm:vanilla_estimator}
(i) (Canonical parameter choice) If $U$ is of the form \eqref{eq:canonical}, 
        then for any $\alpha \in (0,1/2)$, we have
        $${1\over d} \|\hat{\bd{\Sigma}}_{\bmt}^{-1} - \bd{\Sigma}^{-1} \|_F = O \Big ({(\lambda_1 + \mu)^2 \over \mu} \Big ( {\log(ed) \over n} \Big )^{1/2 - \alpha} \Big )$$
        with probability at least $1-Cd^{-c}$ for some constants $c,C>0$, provided that
        $$  n \succeq \bigg(\frac{d(\lambda_1 + \mu)}{\mu}\bigg)^{\frac{2}{1 - 2 \alpha}}\log(ed).$$
        \medskip

\noindent        
(ii) (Improvised parameter choice) If $U$ is of the form \eqref{eq:improvised}, then we have
        $${1\over d} \|\hat{\bd{\Sigma}}_{\bmt}^{-1} - \bd{\Sigma}^{-1} \|_F = O \Big ({(\lambda_1 + \mu)^2 \over \mu} \Big ( {\log(ed) \over n} \Big )^{1/2} \Big )$$
        with probability at least $1-Cd^{-c}$ for some constants $c,C>0$, provided that
        $$  n \succeq \bigg(\frac{d(\lambda_1 + \mu)}{\mu}\bigg)^{2}\log(ed).$$
\end{theorem}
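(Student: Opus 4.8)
The plan is to feed the entrywise concentration bound of Theorem~\ref{thm:Belomestny} into the inversion-error estimate of Proposition~\ref{prop:inv-sigma_hat}. The preparatory observation is purely spectral: since $\bd{L}$ is a graph Laplacian its eigenvalues lie in $[0,\lambda_1]$, so the eigenvalues of $\bd{\Sigma} = (\bd{L}+\mu\bd{I})^{-1}$ lie in $[(\lambda_1+\mu)^{-1},\mu^{-1}]$. Hence I may take $R = \|\bd{\Sigma}\|_2 = \mu^{-1}$ in Theorem~\ref{thm:Belomestny}, while $s_{\min}(\bd{\Sigma}) = (\lambda_1+\mu)^{-1}$ is the quantity controlling the denominator in Proposition~\ref{prop:inv-sigma_hat}. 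I would fix $\gamma^2 = 2+c$ (so $\gamma > \sqrt 2$), which makes the failure probability $12e^{-\gamma^2}d^{2-\gamma^2} = C d^{-c}$, and henceforth work on the event $\|\hat{\bd{\Sigma}}_{\bmt}-\bd{\Sigma}\|_\infty < \tau(U)$; the remaining hypothesis \eqref{eq:BMT} of Theorem~\ref{thm:Belomestny} reduces in both cases to a mild lower bound on $n$ that is dominated by the final sample-complexity threshold.

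Next I would evaluate $\tau(U)$ from \eqref{eq:tau} for the two prescribed values of $U$. For the improvised choice $U=R^{-1/2}$ one has $RU^2 = 1$ and $U^{-2}=R=\mu^{-1}$, giving $\tau(U) = O\bigl(\mu^{-1}(\log(ed)/n)^{1/2}\bigr)$. For the canonical choice $U = c_0 R^{-1/2}\sqrt{\log(n/\log(ed))}$, writing $\alpha := c_0^2 \in (0,1/2)$ yields $RU^2 = \alpha\log(n/\log(ed))$, hence $e^{RU^2} = (n/\log(ed))^{\alpha}$ and, after absorbing the helpful $\log(n/\log(ed))$ factor in the denominator, $\tau(U) = O\bigl(\mu^{-1}(\log(ed)/n)^{1/2-\alpha}\bigr)$. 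The bridge from the entrywise norm to the operator and Frobenius norms is supplied by the elementary inequalities $\tfrac1d\|\bd{A}\|_F \le \|\bd{A}\|_\infty$ and $\|\bd{A}\|_2 \le \|\bd{A}\|_F \le d\|\bd{A}\|_\infty$, which give $\tfrac1d\|\hat{\bd{\Sigma}}_{\bmt}-\bd{\Sigma}\|_F \le \tau(U)$ and $\|\hat{\bd{\Sigma}}_{\bmt}-\bd{\Sigma}\|_2 \le d\,\tau(U)$.

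I would then verify the hypothesis of Proposition~\ref{prop:inv-sigma_hat}, namely $s_{\min}(\bd{\Sigma}) > \|\hat{\bd{\Sigma}}_{\bmt}-\bd{\Sigma}\|_2$, which by the previous display reduces to $(\lambda_1+\mu)^{-1} > d\,\tau(U)$. Solving this inequality for $n$ gives exactly the stated thresholds $n \succeq (d(\lambda_1+\mu)/\mu)^{2/(1-2\alpha)}\log(ed)$ in the canonical case and $n \succeq (d(\lambda_1+\mu)/\mu)^{2}\log(ed)$ in the improvised case. Imposing the inequality with a factor of two to spare forces $\|\hat{\bd{\Sigma}}_{\bmt}-\bd{\Sigma}\|_2 \le \tfrac12 s_{\min}(\bd{\Sigma})$, so the denominator in Proposition~\ref{prop:inv-sigma_hat} is at least $\tfrac12 s_{\min}(\bd{\Sigma})^2 = \tfrac12(\lambda_1+\mu)^{-2}$. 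Consequently $\tfrac1d\|\hat{\bd{\Sigma}}_{\bmt}^{-1}-\bd{\Sigma}^{-1}\|_F \le 2(\lambda_1+\mu)^2\,\tau(U)$, and substituting the two values of $\tau(U)$ produces the two displayed rates of order $\tfrac{(\lambda_1+\mu)^2}{\mu}(\log(ed)/n)^{1/2-\alpha}$ and $\tfrac{(\lambda_1+\mu)^2}{\mu}(\log(ed)/n)^{1/2}$.

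The main source of looseness, which I do not expect to remove, is the dimension-dependent conversion $\|\bd{A}\|_2 \le d\|\bd{A}\|_\infty$. It is precisely this factor of $d$ that propagates into the invertibility requirement $(\lambda_1+\mu)^{-1} > d\,\tau(U)$ and thereby into the sample-complexity exponent, and it is ultimately what makes the vanilla estimator compare unfavourably with our approach in the dense regime (cf.\ Theorem~\ref{thm:er-model} and Remark~\ref{rem:er-rem1}). Because Theorem~\ref{thm:Belomestny} controls only the entrywise norm whereas Proposition~\ref{prop:inv-sigma_hat} needs the spectral norm, this step seems essentially forced absent extra structure on the error matrix; the only real bookkeeping is to track the exponent of $(\log(ed)/n)$ consistently through the two choices of $U$.
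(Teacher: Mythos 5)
Your proposal is correct and follows essentially the same route as the paper's own proof: set $R=\|\bd{\Sigma}\|_2=\mu^{-1}$ and $s_{\min}(\bd{\Sigma})=(\lambda_1+\mu)^{-1}$, feed the entrywise bound of Theorem~\ref{thm:Belomestny} for the two choices of $U$ into Proposition~\ref{prop:inv-sigma_hat} via $\|\cdot\|_2\le\|\cdot\|_F\le d\,\|\cdot\|_\infty$, and solve the invertibility condition $d\,\tau(U)\lesssim (\lambda_1+\mu)^{-1}$ to obtain the stated sample-complexity thresholds. Your extra bookkeeping --- fixing $\gamma^2=2+c$, the identification $\alpha=c_0^2$ in the canonical case, the explicit evaluation of $\tau(U)$, and the check that condition \eqref{eq:BMT} is dominated by the final threshold --- merely fills in details the paper leaves implicit.
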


When $\bd{L}$ is the Laplacian of an ER$(d,p)$ random graph, we have 
$\lambda_1 = dp + O(\sqrt{dp\log d})$
with high probability. 
Applying Theorem \ref{thm:vanilla_estimator} to this special case gives
\begin{corollary}
        (i) (Canonical parameter choice) If $U$ is of the form \eqref{eq:canonical}, 
        then for any $\alpha \in (0,1/2)$, we have
        $${1\over d} \|\hat{\bd{\Sigma}}_{\bmt}^{-1} - \bd{\Sigma}^{-1} \|_F 
        = O \Big ({(dp)^2 \over \mu} \Big ( {\log(ed) \over n} \Big )^{1/2 - \alpha} \Big )$$
        with probability at least $1-Cd^{-c}$ for some constants $c,C>0$, provided that
        \begin{equation} \label{eq:vanilla-canonical}
          n \succeq \bigg(\frac{d^2p}{\mu}\bigg)^{\frac{2}{1 - 2 \alpha}}\log(ed).
        \end{equation}
\medskip
        
        \noindent
        (ii) (Improvised parameter choice) If $U$ is of the form \eqref{eq:improvised}, then we have
        $${1\over d} \|\hat{\bd{\Sigma}}_{\bmt}^{-1} - \bd{\Sigma}^{-1} \|_F = O \Big ({(dp)^2 \over \mu} \Big ( {\log(ed) \over n} \Big )^{1/2} \Big )$$
        with probability at least $1-Cd^{-c}$ for some constants $c,C>0$, provided that
        \begin{equation} \label{eq:vanilla-improvised} 
          n \succeq \bigg(\frac{d^2p}{\mu}\bigg)^{2}\log(ed).
        \end{equation}
\end{corollary}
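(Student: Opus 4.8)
The plan is to obtain this corollary as a direct specialization of Theorem~\ref{thm:vanilla_estimator} to the Erd\H{o}s--R\'enyi ensemble, so the only genuinely new ingredient is the spectral estimate $\lambda_1 = \lambda_{\max}(\bd{L}) = dp + O(\sqrt{dp \log d})$ holding with high probability. Once this estimate is in hand, both parts follow by substituting $\lambda_1 \asymp dp$ into the error bounds and sample-complexity thresholds of Theorem~\ref{thm:vanilla_estimator}: in the regime of interest the quantity $(\lambda_1 + \mu)^2/\mu$ becomes $O((dp)^2/\mu)$ and the threshold $(d(\lambda_1+\mu)/\mu)^{2/(1-2\alpha)}\log(ed)$ becomes $(d^2 p/\mu)^{2/(1-2\alpha)}\log(ed)$, matching \eqref{eq:vanilla-canonical}, and analogously for the improvised choice, giving \eqref{eq:vanilla-improvised}.

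To establish the spectral estimate I would first center the Laplacian. Writing $\bd{L} = \bbE[\bd{L}] + (\bd{L} - \bbE[\bd{L}])$ and using $\bbE[\bd{A}] = p(\bone\bone^\top - \bd{I})$ together with $\bbE[\bd{D}] = (d-1)p\,\bd{I}$, one finds $\bbE[\bd{L}] = dp\,\bd{I} - p\,\bone\bone^\top$. This mean matrix has $\bone$ in its kernel and acts as $dp\,\bd{I}$ on $\bone^{\perp}$, so $\lambda_{\max}(\bbE[\bd{L}]) = dp$ exactly. It then remains to bound the fluctuation $\|\bd{L} - \bbE[\bd{L}]\|_2 \le \|\bd{D} - \bbE[\bd{D}]\|_2 + \|\bd{A} - \bbE[\bd{A}]\|_2$. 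The first term equals $\max_i |d_i - (d-1)p|$, which is $O(\sqrt{dp\log d})$ with high probability by a Bernstein bound for the binomial degrees together with a union bound over the $d$ vertices; the second term is the operator norm of a centered symmetric random matrix with independent entries, which is $O(\sqrt{dp})$ with high probability for $dp \gtrsim \log d$ by standard random-matrix concentration. Weyl's inequality then yields $|\lambda_1 - dp| \le \|\bd{L} - \bbE[\bd{L}]\|_2 = O(\sqrt{dp\log d})$, as claimed.

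With the estimate secured, the final substitution is routine but hinges on one quantitative check, which is the main point requiring care: the fluctuation $O(\sqrt{dp\log d})$ must be genuinely lower order than the leading term $dp$. This holds precisely because we work above the connectivity threshold $p = \Omega(\log d / d)$, under which $\sqrt{dp\log d} = o(dp)$ and hence $\lambda_1 = dp(1 + o(1))$. In the ill-conditioned regime under consideration, where $\mu = O(dp)$, we then have $\lambda_1 + \mu = \Theta(dp)$, so that $(\lambda_1 + \mu)^2/\mu = \Theta((dp)^2/\mu)$ and $d(\lambda_1 + \mu)/\mu = \Theta(d^2 p/\mu)$; feeding these into the two parts of Theorem~\ref{thm:vanilla_estimator} produces exactly the stated bounds and the thresholds \eqref{eq:vanilla-canonical} and \eqref{eq:vanilla-improvised}. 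The main obstacle, such as it is, is thus not the specialization itself but the sharp control of $\|\bd{A} - \bbE[\bd{A}]\|_2$; invoking the appropriate spectral-norm concentration for sparse random matrices, valid down to the connectivity threshold, is what guarantees the $O(\sqrt{dp})$ bound and hence the clean form of the estimate.
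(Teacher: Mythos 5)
Your proposal is correct and takes essentially the same route as the paper: the paper also reduces the corollary to the high-probability estimate $\lambda_1 = dp + O(\sqrt{dp\log d})$, which it proves (in the proof of Theorem~\ref{thm:er-model}) by the very same centering-plus-Weyl decomposition you describe, bounding $\|\bd{D} - (d-1)p\bd{I}\|_2 = O(\sqrt{dp\log d})$ via sub-Gaussian concentration of binomial degrees and $\|\bd{A} - p(\bd{J}-\bd{I})\|_2 = O(\sqrt{dp})$ via a sparse random-matrix spectral-norm bound, and then substitutes $\lambda_1 + \mu = \Theta(dp)$ into Theorem~\ref{thm:vanilla_estimator}. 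Your explicit observation that this substitution requires $\mu = O(dp)$ is a useful clarification of an assumption the paper leaves implicit when it replaces $(\lambda_1+\mu)^2/\mu$ by $(dp)^2/\mu$.
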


\color{black}

\section{Detailed proofs of theoretical results}

In this section, we provide detailed proofs of various theoretical results in the earlier sections of the paper. 

\subsection{On the expectation of $\varphi_n(\bd{t})$}
We begin with a Lemma that deals with the expectation of $\varphi_n(\bd{t})$.
\begin{lemma}[Expectation of $\varphi_n(\bd{t})$]
\[
\varphi(\bd{t}) 
= 
\det\left(\frac{1}{\eta}\Leta\right)^{1/2}
\cdot
\expo*{-\frac{1}{2} \langle \bd{t},\Leta\bd{t} \rangle}\;.
\]
\label{lem:4.1}
\end{lemma}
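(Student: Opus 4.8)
The plan is to exploit the independence of $\bd{X}$ and $\bd{Y}$ and integrate them out one at a time, turning the double expectation defining $\varphi$ into a single Gaussian integral that can be evaluated by completing the square. The two nontrivial identifications at the end will be handled using the definition of $\Leta$ and the Woodbury relation already recorded in the text.

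First I would condition on $\bd{X}$ and integrate over $\bd{Y}$. For fixed $\bd{X}$, the scalar $\langle \bd{Y},\bd{X}+\bd{t}\rangle$ is centered Gaussian with variance $\eta\,\|\bd{X}+\bd{t}\|^2$, since $\bd{Y}\sim\mathcal{N}(\bd{0},\eta\bd{I})$. Hence the inner conditional expectation is the characteristic function of this Gaussian evaluated at $1$, namely $\expo{-\tfrac{\eta}{2}\|\bd{X}+\bd{t}\|^2}$. This reduces the claim to computing the single Gaussian expectation $\mathbb{E}_{\bd{X}}\bigl[\expo{-\tfrac{\eta}{2}\|\bd{X}+\bd{t}\|^2}\bigr]$ with $\bd{X}\sim\mathcal{N}(\bd{0},\bd{\Sigma})$.

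Next I would write this expectation as an integral against the $\mathcal{N}(\bd{0},\bd{\Sigma})$ density and collect the quadratic and linear terms in the integration variable $\bd{x}$. Setting $\bd{M}\coloneqq\bd{\Sigma}^{-1}+\eta\bd{I}$, the exponent becomes $-\tfrac12\,\bd{x}^\top\bd{M}\bd{x}-\eta\,\bd{x}^\top\bd{t}-\tfrac{\eta}{2}\|\bd{t}\|^2$. Completing the square in $\bd{x}$ and evaluating the resulting standard Gaussian integral produces a determinant prefactor proportional to $\det(\bd{\Sigma})^{-1/2}\det(\bd{M})^{-1/2}$ together with the residual exponent $\tfrac{\eta^2}{2}\,\bd{t}^\top\bd{M}^{-1}\bd{t}-\tfrac{\eta}{2}\|\bd{t}\|^2$.

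Finally, two identifications match the claimed form. For the prefactor I would use $\bd{M}\bd{\Sigma}=\bd{I}+\eta\bd{\Sigma}=\eta\,(\Leta)^{-1}$, which follows directly from $\Leta=(\bd{\Sigma}+\eta^{-1}\bd{I})^{-1}$; this gives $\det(\bd{\Sigma})^{-1/2}\det(\bd{M})^{-1/2}=\det(\eta\,(\Leta)^{-1})^{-1/2}=\det(\tfrac{1}{\eta}\Leta)^{1/2}=c_\eta$, the correct normalization. For the exponent I would invoke the Woodbury computation already displayed in the text, namely $\Leta=\eta\bd{I}-\eta^2(\bd{\Sigma}^{-1}+\eta\bd{I})^{-1}=\eta\bd{I}-\eta^2\bd{M}^{-1}$, so that $\tfrac{\eta^2}{2}\,\bd{t}^\top\bd{M}^{-1}\bd{t}-\tfrac{\eta}{2}\|\bd{t}\|^2=-\tfrac12\langle\bd{t},\Leta\bd{t}\rangle$. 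Combining the prefactor and exponent yields the stated identity. I do not anticipate any serious obstacle; the only points requiring care are keeping the two determinant factors straight and verifying the algebraic link between $\bd{M}^{-1}$ and $\Leta$, both of which follow cleanly from the already-established Woodbury relation.
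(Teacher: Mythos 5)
Your proof is correct, but it runs Fubini in the opposite order from the paper. The paper conditions on $\bd{Y}$ first: integrating out $\bd{X}$ via its characteristic function leaves the integral $\int e^{i\langle\bd{y},\bd{t}\rangle}e^{-\frac{1}{2}\langle\bd{y},(\bd{\Sigma}+\eta^{-1}\bd{I})\bd{y}\rangle}\,\deri\bd{y}$ (against the normalizing constant of $\mathcal{N}(\bd{0},\eta\bd{I})$), which is a standard Gaussian Fourier transform whose exponent is \emph{immediately} $-\frac{1}{2}\langle\bd{t},\Leta\bd{t}\rangle$ with $\Leta=(\bd{\Sigma}+\eta^{-1}\bd{I})^{-1}$, and whose prefactor is $\det(\bd{I}+\eta\bd{\Sigma})^{-1/2}=c_\eta$ --- no completing the square and no Woodbury needed. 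You instead condition on $\bd{X}$ first, which has the advantage that the remaining integral $\mathbb{E}_{\bd{X}}\bigl[\exp(-\tfrac{\eta}{2}\|\bd{X}+\bd{t}\|^2)\bigr]$ is real and non-oscillatory, but at the cost of two extra algebraic steps: completing the square in $\bd{M}=\bd{\Sigma}^{-1}+\eta\bd{I}$, and invoking the Woodbury identity $\Leta=\eta\bd{I}-\eta^2\bd{M}^{-1}$ (which the paper indeed establishes, though only later, in the section on recovering $\bd{\Sigma}^{-1}$) to recognize the residual exponent $\tfrac{\eta^2}{2}\bd{t}^\top\bd{M}^{-1}\bd{t}-\tfrac{\eta}{2}\|\bd{t}\|^2$ as $-\tfrac12\langle\bd{t},\Leta\bd{t}\rangle$; your determinant bookkeeping $\det(\bd{\Sigma})^{-1/2}\det(\bd{M})^{-1/2}=\det(\bd{M}\bd{\Sigma})^{-1/2}=\det(\tfrac{1}{\eta}\Leta)^{1/2}$ checks out. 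One minor observation: your route presupposes $\bd{\Sigma}$ invertible (harmless here, since $\bd{\Sigma}=(\bd{L}+\mu\bd{I})^{-1}$ with $\mu>0$), whereas the paper's order of integration never touches $\bd{\Sigma}^{-1}$ and would extend verbatim to singular covariances.
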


\begin{proof}
We observe that
\begin{eqnarray*}
    \varphi(\bd{t}) &=& \mathbb E \Big [\exp \Big (i \langle \bd{Y}_1, \bd{X}_1 + \bd{t} \rangle \Big ) \Big ] \\
    &=& \mathbb E_{\bd{Y}_1} \Big [\exp \Big (i\langle \bd{Y}_1,\bd{t}\rangle \Big ) \mathbb E_{\bd{X}_1} \Big [\exp \Big (i\langle \bd{Y}_1, \bd{X}_1 \rangle \Big ) \Big ] \Big ] \\
    &=& \mathbb E_{\bd{Y}_1} \Big [\exp \Big (i\langle \bd{Y}_1,\bd{t}\rangle \Big ) \exp \Big ( -{1\over 2} \langle \bd{Y}_1 , \bd{\Sigma} \bd{Y}_1 \rangle \Big ) \Big ]\\
    &=& \int_{\RR^d} 
\frac{1}{\sqrt{\det (2\pi\eta\bd{I})}}
\expo*{i\langle\bd{y},\bd{t}\rangle} 
\expo*{-\frac{1}{2}\langle \bd{y}, \bd{\Sigma}\bd{y} \rangle}
\expo*{-\frac{1}{2}\langle \bd{y}, \eta^{-1} \bd{y} \rangle}
\;\deri\bd{y} \\
&=& \int_{\mathbb{R}^d} 
\frac{1}{\sqrt{\det(2\pi\eta\bd{I})}}
\expo*{i\langle\bd{y},\bd{t}\rangle}
\expo*{-\frac{1}{2}\langle\bd{y},(\bd{\Sigma}+\eta^{-1}\bd{I})\bd{y}\rangle}
\deri\bd{y}\\
&=& \frac{1}{\sqrt{\det(\bd{I}+\eta\bd{\Sigma})}} 
\expo*{-\frac{1}{2}\langle\bd{t},\Leta\bd{t}\rangle}\\ 
&=& c_\eta \expo*{-\frac{1}{2}\langle\bd{t},\Leta\bd{t}\rangle}\;,
\end{eqnarray*}
where 
\begin{equation}\label{eq:defceta}
c_\eta 
\coloneqq 
\det\left(\bd{I}+\eta\bd{\Sigma}\right)^{-1/2} 
= \det\left(\frac{1}{\eta}\Leta\right)^{1/2}\;.
\end{equation}

\end{proof}

\subsection{Proof of Lemma~\ref{lem2.1}}
We continue with the proof of Lemma~\ref{lem2.1}.
\begin{proof}[Proof of Lemma~\ref{lem2.1}]

Observe that
\begingroup
\addtolength{\jot}{1em}
\begin{align*}
\frac{1}{2}\langle\bd{t},
\Leta\bd{t}\rangle 
{} = & - \log\varphi(\bd{t}) + \log\varphi(\bd{0})\\
{} = & - \log\varphi_n(\bd{t}) + \Bigl(\log\varphi_n(\bd{t})-\log\varphi(\bd{t})\Bigr)
+ \log\varphi(\bd{0})\;.
\end{align*}
\endgroup
Taking real parts of both sides yields
\begingroup
\addtolength{\jot}{1em}
\begin{align*}
\frac{1}{2} \langle \bd{t}, 
\Leta \bd{t} \rangle 
={} & - \log\abs*{\varphi_n(\bd{t})} + 
        \fbr[\Big]{\log\abs*{\varphi_n(\bd{t})}-\log\abs*{\varphi(\bd{t})}} + 
        \log\abs*{\varphi(\bd{0})}\\
 ={} & - \log\abs*{\varphi_n(\bd{t})} + S(\bd{t}) + \log\abs*{\varphi(\bd{0})}.
\end{align*}
\endgroup

Let $\leta_{ij}$ denote the entry at $i$'th row and $j$'th column of $\Leta$.
Since $\Leta$ is symmetric,
we have 
\[
\langle \bd{e}_i,\Leta\bd{e}_i 
\rangle 
= \leta_{ii}\;,
\quad \Big \langle {\bd{e}_i +\bd{e}_j \over \sqrt 2}, \Leta \Big ({\bd{e}_i+\bd{e}_j \over \sqrt 2} \Big ) \Big \rangle 
= {1\over 2}\leta_{ii} +  \leta_{ij} + {1\over 2} \leta_{jj} \quad \text{for $i\neq j$}\;.
\]

Recall that
\begin{eqnarray*}
    \hat{\leta_{ii}} &=& -2 \log |\varphi_n(\bd{e}_i)| + 2 \log |\varphi_n(\bd{0})| \\
    \hat{\leta_{ij}} &=& -2 \log \Big | \varphi_n \Big ({\bd{e}_i + \bd{e}_j \over \sqrt 2}\Big ) \Big | + \log |\varphi_n (\bd{e}_i)| + \log |\varphi_n(\bd{e}_j)| \quad\text{for } i\ne j.
\end{eqnarray*}
Thus,
\begin{eqnarray*}
\leta_{ii} - \hat{\leta_{ii}} &=& \langle \bd{e}_i , \Leta \bd{e}_i \rangle - (-2\log |\varphi_n(\bd{e}_i)| + 2 \log |\varphi_n(\bd{0})|)      \\
&=& 2S_n(\bd{e}_i) - 2S_n(\bd{0})
\end{eqnarray*}
and for $i\ne j$
\begin{eqnarray*}
\leta_{ij} - \hat{\leta_{ij}} &=& 
\Big \langle {\bd{e}_i +\bd{e}_j \over \sqrt 2}, \Leta \Big ({\bd{e}_i+\bd{e}_j \over \sqrt 2} \Big ) \Big \rangle
- {1\over 2}\Big (\leta_{ii} + \leta_{jj}\Big ) - \hat{\leta_{ij}} \\
&=& 2S_n \Big ({\bd{e}_i + \bd{e}_j \over \sqrt 2} \Big ) - S_n(\bd{e}_i) - S_n(\bd{e}_j).
\end{eqnarray*}

\end{proof}

\subsection{Proofs of Propositions~\ref{prop:ce1} and \ref{prop:ce2}}
Our next item is the proof of Proposition~\ref{prop:ce1}.
\begin{proof}[Proof of Proposition~\ref{prop:ce1}]
Note that
\[
\abs*{\varphi_n(\bd{t})-\varphi(\bd{t})} \leq 
\abs*{\Re\bigl(\varphi_n(\bd{t}) - \varphi(\bd{t})\bigr)} 
+ \abs*{\Im\bigl(\varphi_n(\bd{t})-\varphi(\bd{t})\bigr)}\;.
\]
For $k \in \{1,\ldots,n\}$, we define 
\[
\xi_k \coloneqq 
\Re\Bigl( e^{i\langle \bd{Y}_k, \bd{X}_k + \bd{t}\rangle} - \varphi(\bd{t}) \Bigr)\;.
\]
Then we have i.i.d.\ real random variables $\xi_1,\ldots,\xi_n$ such that
\[
\Re\Bigl( \varphi_n(\bd{t})-\varphi(\bd{t}) \Bigr) 
= \frac{1}{n} \sum_{k=1}^n \xi_k\;.
\]

Observe that
\[
\Exp*{\xi_k} = 0\;, \quad 
\abs*{\xi_k} \leq 1 + c_\eta \leq 2\;, \quad
\Var*{\xi_k} \leq 1 - \abs*{\varphi(\bd{t})}^2 \leq 1\;.
\]
By the Bernstein's inequality, we have for any $x>0$
\[
\Prob*{\abs*{\Re\bigl(\varphi_n(\bd{t})-\varphi(\bd{t}))}\geq x} =
\Prob*{\abs*{\frac{1}{n} \sum_{k=1}^n \xi_k}  \geq x} \leq 
2 \exp\Bigl(-\frac{\frac{1}{2} n x^2}{1+\frac{2}{3}x}\Bigr)\; \cdot
\]
Using similar bound for the imaginary part, we have
\begingroup
\addtolength{\jot}{1em}
\begin{align*}
{} & 
\Prob[\Big]{\abs*{\varphi_n(\bd{t})-\varphi(\bd{t})}\geq x } \\ 
\leq{} &
\Prob[\Big]{\abs*{\Re\bigl(\varphi_n(\bd{t})-\varphi(\bd{t})\bigr)} 
+ \abs*{\Im\bigl(\varphi_n(\bd{t})-\varphi(\bd{t})\bigr)} \geq x}\\
\leq{} & 
\Prob[\Big]{\abs*{\Re\bigl(\varphi_n(\bd{t})-\varphi(\bd{t})\bigr)}
\geq 
\frac{x}{2}} 
+ \Prob[\Big]{\abs*{\Im\bigl(\varphi_n(\bd{t})-\varphi(\bd{t})\bigr)} \geq \frac{x}{2}}\\
\leq{} & 
4 \exp\Bigl(-\frac{\frac{1}{8}nx^2}{1+\frac{1}{3}x}\Bigr)
= 
4 \exp\Bigl(-\frac{3nx^2}{24+8x}\Bigr)\; \cdot
\end{align*}
\endgroup
This completes the proof of Proposition~\ref{prop:ce1}.
\end{proof}

We continue on to the proof of Proposition~\ref{prop:ce2}.
\begin{proof}[Proof of Proposition~\ref{prop:ce2}]
We have
\[
S_n(\bd{t}) 
= \log\Bigl|\frac{\varphi_n(\bd{t})}{\varphi(\bd{t})}\Bigr| 
\leq 
   \log\Bigl(\Bigl|\frac{\varphi_n(\bd{t})-\varphi(\bd{t})}{\varphi(\bd{t})}\Bigr| + 1\Bigr) 
\leq \Bigl|\frac{\varphi_n(\bd{t})-\varphi(\bd{t})}{\varphi(\bd{t})}\Bigr|\;,
\]
which implies that 
\[
\Prob[\Big]{ S_n(\bd{t})\geq x } \leq 
\Prob[\Big]{ \abs*{\varphi_n(\bd{t})-\varphi(\bd{t}) } \geq x \cdot \varphi(\bd{t})} 
\quad \text{for any }x>0\;.
\]  
On the event 
\[
\Set*{ \abs*{\varphi_n(\bd{t}) - \varphi(\bd{t})} \leq \frac{1}{2} \abs*{\varphi(\bd{t})} }
\]
we have
\begingroup 
\addtolength{\jot}{1em}
\begin{align*}
-S_n(\bd{t}) = {} & \log\Bigl|\frac{\varphi(\bd{t})}{\varphi_n(\bd{t})}\Bigr| 
\leq {}  \log\Bigl(\Bigl|\frac{\varphi_n(\bd{t})-\varphi(\bd{t})}{\varphi_n(\bd{t})}\Bigr|
+1\Bigr)  \\
\leq {} &  \log\Bigl(2\Bigl|\frac{\varphi_n(\bd{t})-\varphi(\bd{t})}{\varphi(\bd{t})}\Bigr| 
+ 1\Bigr)
\leq {}  2 \Bigl|\frac{\varphi_n(\bd{t})-\varphi(\bd{t})}{\varphi(\bd{t})}\Bigr|\; \cdot
\end{align*}
\endgroup
Hence, for any $x>0$,
\[
\Prob[\Big]{-S_n(\bd{t})\geq x} \leq
\Prob[\Big]{\abs*{\varphi_n(\bd{t}) - \varphi(\bd{t})} \geq 
\frac{x}{2}\abs*{\varphi(\bd{t})}} + 
\Prob[\Big]{\abs*{\varphi_n(\bd{t}) - \varphi(\bd{t})} > 
\frac{1}{2}\abs*{\varphi(\bd{t})}}\;.
\]
In particular, if $x\in (0,1]$ we deduce that
\[
\Prob*{ \abs{S_n(\bd{t})} \geq x } \leq 3 \cdot 
\Prob[\Big]{\abs*{\varphi_n(\bd{t}) - \varphi(\bd{t})} \geq \frac{x}{2} 
\abs*{\varphi(\bd{t})}}\;.
\]

 On the other hand, for any $\bd{t} \in \mathbb R^d$ with $\|\bd{t}\| \le 1$, one has
$$ \abs*{\langle\bd{t},\Leta \bd{t}\rangle}  \le \|\Leta\|_2^2.$$
This implies
$$|\varphi(\bd{t})| = c_{\eta} \exp\Bigl(-\frac{1}{2}\langle\bd{t},\Leta\bd{t}\rangle\Bigr) \le c_{\eta} \exp \Bigl(-{1\over 2} \|\Leta\|_2^2 \Bigr ) = 2 c_*(\eta).$$
Thus, for $x\in (0,1]$ and $\|\bd{t}\|\le 1$
\[
\Prob*{ |S_n(\bd{t})| \geq x} 
\leq 3 \Prob*{ |\varphi_n(\bd{t}) - \varphi(\bd{t})| \geq c_\ast(\eta)x }\;.
\]
\end{proof}

\subsection{Proofs of Theorems~\ref{thm:L-to-SigmaInv} and \ref{thm3.4}}
We first tackle the proof of Theorem~\ref{thm3.4}
\begin{proof}[Proof of Theorem~\ref{thm3.4}]
For $x\in (0,1]$, we have
\begingroup
\addtolength{\jot}{1em}
\begin{align*}
\Prob[\Big]{ \abs*{ \leta_{ii} - 
\hat{\leta_{ii}} } \geq x } 
\leq{} & \Prob[\Big]{ 2\abs*{ S_n(\bd{e}_i) } + 2| S_n(\bd{0}) | \geq x } \\
\leq{} & \Prob[\Big]{ \abs*{ S_n(\bd{e}_i) } \geq {x\over 4} } + 
\Prob[\Big]{ \abs*{ S_n(\bd{0}) } \geq {x\over 4} } \\
\leq{} & C_1\cdot\exp\Bigl(-C_2 nc_\ast(\eta)^2 x^2\Bigr)\;,
\end{align*}
\endgroup
for some universal constants $C_1,C_2>0$. 
\medskip

For $i\ne j$ and $x\in (0,1]$, we have
\begingroup
\addtolength{\jot}{1em}
\begin{align*}
\Prob[\Big]{\abs[\big]{l_{ij}^{(\eta)}-\hat{ l_{ij}^{(\eta)}}}\geq x} \leq{} & 
\Prob[\Big]{2\abs[\big]{S_n\Big ({\bd{e}_i+\bd{e}_j\over \sqrt 2} \Big)}
+\abs[\big]{S_n(\bd{e}_i)}+\abs[\big]{S(\bd{e}_j)} \geq x }\\
\leq{} & \Prob[\Big]{\abs[\big]{S_n \Big ({\bd{e}_i+\bd{e}_j \over \sqrt 2} \Big )}\geq\frac{x}{4}} 
+ \Prob[\Big]{\abs[\big]{S_n(\bd{e}_i)} \geq \frac{x}{4}} 
+ \Prob[\Big]{\abs[\big]{S_n(\bd{e}_j)} \geq \frac{x}{4}} \\
\leq{} & C_1 \cdot \exp\Bigl( -C_2nc_\ast(\eta)^2 x^2 \Bigr)\;
\end{align*}
\endgroup
for some universal constants $C_1,C_2>0$.
\medskip

Therefore, for $x\in (0,1]$
\[
\Prob*{\max_{i,j}
\abs*{\leta_{ij}-\hat{\leta_{ij}}} \geq x }
\leq d^2 \cdot C_1 \cdot \exp\Bigl( -C_2nc_\ast(\eta)^2 x^2 \Bigr)\;,
\]
for some universal constants $C_1,C_2>0$.
Note that,
\[
\|\Leta - \Letahat \|_F^2 = 
\sum_{i,j} \abs*{
\leta_{ij} - 
\hat{\leta_{i,j}} }^2 
\leq 
d^2 \cdot \max_{i,j} 
\abs*{\leta_{ij} - \hat{\leta_{ij}} }^2\;.
\]

\end{proof}

\color{black}

We are now ready to address the proof of Theorem~\ref{thm:L-to-SigmaInv}.
\begin{proof}[Proof of Theorem~\ref{thm:L-to-SigmaInv}]
From \eqref{eq:Sigma-inv-expression} and \eqref{eq:Sigma-inv-plugin-estimator}, we have
\[
\widehat{\bd{\Sigma}^{-1}} - 
\bd{\Sigma}^{-1} = 
\eta^2
\Bigl(\bigl(\eta\bd{I} - \widehat{\Leta}\bigr)^{-1} - 
\bigl(\eta\bd{I} - \Leta\bigr)^{-1}\Bigr)
\;.
\]
Writing $\bd{X} = \Leta$ and $\bd{X}^\prime = \Letahat$, we have
\begingroup
\addtolength{\jot}{1em}
\begin{align*}
{} & \Bigl(\eta\bd{I} - \bd{X}^\prime\Bigr)^{-1} 
={}  \Bigl(\eta\bd{I} - \bd{X} + \bd{X} - \bd{X}^\prime\Bigr)^{-1} \\
={} & \Bigl(\eta\bd{I} - \bd{X}\Bigr)^{-1} 
- \Bigl(\eta\bd{I} - \bd{X}\Bigr)^{-1} 
\Bigl(\bd{X} - \bd{X}^\prime\Bigr)
\Bigl(\bd{I} + \bigl(\eta\bd{I} - \bd{X}\bigr)^{-1} \bigl(\bd{X} - \bd{X}^\prime\bigr)\Bigr)^{-1}
\Bigl(\eta\bd{I} - \bd{X}\Bigr)^{-1}\;,
\end{align*}
\endgroup
where for the second equality 
above we have used the Woodbury 
identity with $\bd{A} = \eta\bd{I} - \bd{X}$,
$\bd{C} = \bd{I}$, $\bd{U} = \bd{X} - \bd{X}^\prime$ and $\bd{V} = \bd{I}$. 
Now, using the inequalities 
$\|\bd{A}\bd{B}\|_F \le \|\bd{A}\|_F \|\bd{B}\|_2$ and $\|\bd{A} \bd{B}\|_2
\le \|\bd{A}\|_2 \|\bd{B}\|_2$, we obtain
\begingroup
\addtolength{\jot}{1em}
\begin{align*}
{}&\|(\eta\bd{I} - \bd{X}^\prime)^{-1}
- (\eta\bd{I} - \bd{X})^{-1}\|_F\\
={}&\|(\eta\bd{I} - \bd{X})^{-1} 
(\bd{X} - \bd{X}^\prime) (\bd{I} + 
(\eta\bd{I} - \bd{X})^{-1} (\bd{X} - \bd{X}^\prime))^{-1} 
(\eta \bd{I} - \bd{X})^{-1}\|_F \\
\leq{}&\|(\eta \bd{I} - \bd{X})^{-1}
(\bd{X} - \bd{X}^\prime)\|_F 
\|(\bd{I} + (\eta\bd{I} - \bd{X})^{-1}
(\bd{X} - \bd{X}^\prime))^{-1}
(\eta\bd{I} - \bd{X})^{-1}\|_2 \\
\le{}&\|(\eta\bd{I} - \bd{X})^{-1}\|_2
\|(\bd{X} - \bd{X}^\prime)\|_F 
\|(\bd{I} + (\eta\bd{I} - \bd{X})^{-1}
(\bd{X} - \bd{X}^\prime))^{-1}\|_2
\|(\eta\bd{I} - \bd{X})^{-1}\|_2 \\
\leq{}&\|\bd{X} - \bd{X}^\prime\|_F
\|(\eta \bd{I} - \bd{X})^{-1}\|_2^2
\|(\bd{I} + (\eta\bd{I} - \bd{X})^{-1} 
(\bd{X} - \bd{X}^\prime))^{-1}\|_2\;.
\end{align*}
\endgroup

Now observe that
$
\eta\bd{I} - \bd{X} 
= \eta\bd{I} - \Leta
= \eta^2 
(\Sigmainv + \eta \bd{I})^{-1}
$
, which implies
$
\bigl(\eta\bd{I} - \bd{X}\bigr)^{-1} 
= \eta^{-2} 
\bigl(\Sigmainv + \eta\bd{I}\bigr)\;.
$
Hence
\[
\|\bigl(\eta\bd{I} - \bd{X}\bigr)^{-1}\|_2  = \eta^{-2} 
\bigl(\lambda_1 + \mu + \eta\bigr)\;.
\]
\medskip

Let
$
\bd{E} := (\eta\bd{I} - \bd{X})^{-1} (\bd{X} - \bd{X}^\prime)\;.
$
Then
\[
\|\bd{E}\|_2 
\le \eta^{-2} 
\bigl(\lambda_1 + \mu + \eta\bigr) 
\|\bd{X} - \bd{X}^\prime\|_2\;.
\]
As long as $\|\bd{E}\|_2 < 1$,
we have
\[
\|(\bd{I} + \bd{E})^{-1}\|_2
\le \frac{1}{1 - \|\bd{E}\|_2}\; \cdot
\]

{Putting everything together, we get
\begingroup
\addtolength{\jot}{1em}
\begin{eqnarray*}
{}\frac{1}{d}
\|\Sigmainvhat -
\Sigmainv\|_F
{}&\le&
\frac{(\lambda_1 + \mu + \eta)^2}{\eta^4 (1 - \|\bd{E}\|_2)} \cdot
\frac{1}{d} \cdot
\|\Letahat- \Leta\|_F\\
{}&\le&
\frac{(\lambda_1 + \mu + \eta)^2}{\eta^4 \big(1 - \frac{\lambda_1 + \mu + \eta}{\eta^2} \|\Letahat - \Leta\|_2\big)} \cdot
\frac{1}{d} \cdot
\|\Letahat - \Leta\|_F\;.
\end{eqnarray*}
\endgroup}
This completes the proof.
\end{proof}

\subsection{Proof of Proposition \ref{prop:inv-sigma_hat}}
We now establish Proposition \ref{prop:inv-sigma_hat}, which is of importance for the main results in Section \ref{sec:generative}.
\begin{proof}[Proof of Proposition \ref{prop:inv-sigma_hat}]
    The identity
    \[
        \hat{\bd{\Sigma}}^{-1} - \bd{\Sigma}^{-1} = \hat{\bd{\Sigma}}^{-1}(\bd{\Sigma} - \hat{\bd{\Sigma}}) \bd{\Sigma}^{-1}
    \]
    gives
    \begin{align} \nonumber
        \|\hat{\bd{\Sigma}}^{-1} - \bd{\Sigma}^{-1}\|_F &= \| \hat{\bd{\Sigma}}^{-1}(\bd{\Sigma} - \hat{\bd{\Sigma}}) \bd{\Sigma}^{-1}\|_F \\ \nonumber
        &\le \|\hat{\bd{\Sigma}}^{-1}\|_2 \|(\bd{\Sigma} - \hat{\bd{\Sigma}}) \bd{\Sigma}^{-1}\|_F \\ \nonumber
        &\le \|\hat{\bd{\Sigma}}^{-1}\|_2 \|\bd{\Sigma} - \hat{\bd{\Sigma}}\|_F \|\bd{\Sigma}^{-1}\|_2 \\ \label{eq:mat-inv-lipschitz-const}
        &\le \frac{\|\hat{\bd{\Sigma}} - \bd{\Sigma}\|_F}{s_{\min}(\hat{\bd{\Sigma}}) s_{\min}(\bd{\Sigma})} \cdot
    \end{align}
    Now, by Weyl's inequality,
    \[
        |s_{\min}(\hat{\bd{\Sigma}}) - s_{\min}(\bd{\Sigma})| \le \|\hat{\bd{\Sigma}} - \bd{\Sigma}\|_2,
    \]
    which yields the lower bound
    \[
        s_{\min}(\hat{\bd{\Sigma}}) \ge s_{\min}(\bd{\Sigma}) - \|\hat{\bd{\Sigma}} - \bd{\Sigma}\|_2 > 0.
    \]
    Plugging this into \eqref{eq:mat-inv-lipschitz-const} we get the desired upper bound.
\end{proof}

\subsection{Proofs of Theorems~\ref{thm:er-model} and \ref{thm:vanilla_estimator}}
We complete this section with the proofs of the Theorems in Section \ref{sec:generative}.
\begin{proof}[Proof of Theorem~\ref{thm:er-model}]
For semi-dense Erd\"os-R\'enyi graphs, i.e.,
\[
p = \Omega\Bigl(\frac{\log d}{d}\Bigr)\; ,
\]
all the non-zero eigenvalues of $\bd{L} = \bd{D} - \bd{A}$ are $\Theta(dp)$, with probability at least $1 - d^{-c}$. This follows from Weyl's inequality that
\[
|\lambda_i - (d - 1) p| 
\le 
\|(\bd{D} - \bd{A}) - 
((d - 1)p\bd{I} - p(\bd{J} - \bd{I}))\|_2 
\]
for $i = 1, \ldots, d - 1$, 
and the estimate
\begingroup
\addtolength{\jot}{1em}
\begin{align*}   
{} & \|\bd{D} - \bd{A} - ((d - 1)p\bd{I} - p(\bd{J} - \bd{I}))\|_2\\
\le{} & \|\bd{D} - (d - 1)p\bd{I}\|_2
+ \|\bd{A} - p(\bd{J} - \bd{I})\|_2\\ 
={}& O\Bigl(\sqrt{dp\log d}\Bigr)\;,
\end{align*}
\endgroup
which holds with probability at least $1 - d^{-c}$.

Indeed,
\[
\|\bd{D} - (d - 1)p\bd{I}\|_2 
= 
\max_i |D_{ii} - (d - 1)p|\;.
\]
Now $D_{ii}$ follows Binomial
$((d - 1),p)$,
and hence $D_{ii} - (d - 1)p$
is zero mean Sub-Gaussian with parameter $\sigma^2 = O(dp)$,
whence it follows (c.f. \citet{vershynin-book} Chap. 2) that with probability at least $1 - d^{-c}$ we have 
\[
\max_i |D_{ii} - (d - 1)p| = O\bigl(\sqrt{dp\log d}\bigr)\;.
\]
On the other hand,
\[
\|\bd{A} - p(\bd{J} - \bd{I})\|_2
\le 
O\Bigl(\sqrt{dp}\Bigr)\;,
\]
with probability at least
$1 - d^{-c}$ 
(see, e.g., Theorem 5.2 in \cite{lei2015consistency}).

Thus if $\eta = O(p)$,  
we have
\[
    c_{\eta} \ge \exp\bigg(-\frac{1}{2} \sum_{j = 1}^d \frac{\eta}{\lambda_j + \mu}\bigg) \asymp \exp\bigg(-C'\frac{\eta}{p}\bigg) = \Theta(1).
\]

Therefore, with probability at least $1 - d^{-c}$, we have
\[
    \frac{1}{d} \|\hat{\Leta} - \Leta\|_F = O\bigg(\sqrt{\frac{\log d}{n}} \bigg).
\]

{
Using \eqref{eq:error-bd}, we obtain
\begingroup
\addtolength{\jot}{1em}
\begin{align*}
{}&\frac{1}{d}
\|\Sigmainvhat - \Sigmainv\|_F\\ 
\le{}&
\frac{(dp)^2}{p^4} 
\frac{1}{1-
\frac{d\|\Letahat - \Leta\|_2}{p}} \frac{1}{d} 
\|\Letahat - \Leta\|_F\\
={}&
O\bigg(\sqrt{\frac{d^4}{p^4}\frac{\log d}{n}}\bigg)\;,
\end{align*}
\endgroup
}
provided
\[
\frac{d\|\Letahat - \Leta\|_2}{p} \le 
\frac{d^2}{p} 
\frac{1}{d} 
\|\Letahat - \Leta\|_F
\le 
\frac{d^2}{p} 
\sqrt{\frac{\log d}{n}} < 1\;.
\]
Thus we need a sample complexity of
\[
    n > \frac{d^6 \log d}{\Delta_{\mathrm{avg}}^2}
\]
to attain an estimation error of $O(\sqrt{\frac{d^4}{p^4} \frac{\log d}{n}}).$ 
\end{proof}

We finally complete the proof of Theorem~\ref{thm:vanilla_estimator}.
\begin{proof}[Proof of Theorem~\ref{thm:vanilla_estimator}]
First, we note that in our set-up,
\[
   \|\bd{\Sigma}\|_2 = {\mu}^{-1} \quad,\quad s_{\min}(\bd{\Sigma}) = (\lambda_1 + \mu)^{-1}.
\]
\noindent
   (i) From Theorem~1 of \cite{Belomestny2019SparseDeconvolution}, we get by choosing $R = \|\bd{\Sigma}\|_{2} = \mu^{-1}$ and $U = c_0 \sqrt{\frac{1}{R}\log(n / \log(ed))}$ (where $c_0 > 0$ is a sufficiently small constant) that for any $\alpha \in (0, 1/2)$,
\[
    \|\hat{\bd{\Sigma}}_{\bmt} - \bd{\Sigma}\|_{\infty} = O\bigg(\mu^{-1}\bigg(\frac{\log(e d)}{n}\bigg)^{1/2 - \alpha}\bigg),
\]
with probability at least $1 - C d^{-c}$ for some $c, C > 0$. As $\|\cdot\|_2 \le \|\cdot\|_F \le d\|\cdot\|_{\infty}$, we get from Proposition~\ref{prop:inv-sigma_hat} that
\[
        \frac{1}{d}\|\hat{\bd{\Sigma}}_{\bmt}^{-1} - \bd{\Sigma}^{-1}\|_F = O\bigg((\lambda_1 + \mu)^2 \mu^{-1} \bigg(\frac{\log(e d)}{n}\bigg)^{1/2 - \alpha}\bigg),
\]
provided
\[
    d \mu^{-1} \bigg(\frac{\log(e d)}{n}\bigg)^{1/2 - \alpha} < c_* s_{\min}(\bd{\Sigma}) = c_* (\lambda_1 + \mu)^{-1} ,
\]
for a sufficiently small $c_* > 0$,
i.e.
\[
    n \succeq \bigg(d\mu^{-1}(\lambda_1 + \mu)\bigg)^{\frac{2}{1 - 2 \alpha}}\log(ed),
\]
as desired.
\medskip

\noindent
(ii) From Theorem~1 of \cite{Belomestny2019SparseDeconvolution}, we get by choosing $R = \|\bd{\Sigma}\|_{2} = \mu^{-1}$ and $U=R^{-1/2}$ that
$$\|\hat{\bd{\Sigma}}_{\bmt}-\bd{\Sigma}\|_\infty = O\Big (\mu^{-1} \Big({\log(ed)\over n}\Big )^{1/2}\Big )$$
with probability at least $1-Cd^{-c}$ for some $c,C>0$. Using similar argument as part (i), we get the desired result. This completes the proof.
\end{proof}

\setlength\bibsep{10pt}
\bibliographystyle{plainnat}
\bibliography{references}

\end{document}